\newtheorem{theorem}{Theorem}
\newtheorem{definition}[theorem]{Definition}
\newtheorem{proposition}[theorem]{Proposition}
\newcommand{\1}{\>}
\newcommand{\2}{\>\>}
\newcommand{\3}{\>\>\>}
\newcommand{\End}{\textbf{end}}
\newcommand{\struc}[1] {\operatornamewithlimits{struc}\left [ #1 \right ]}
\tikzstyle{decision} = [diamond, draw, fill=blue!20, 
\tikzstyle{block} = [rectangle, draw, fill=blue!20, 
\tikzstyle{line} = [draw, -latex']
\tikzstyle{cloud} = [draw, ellipse,fill=red!20, node distance=3cm,
\tikzset{main node/.style={circle,fill=blue!20,draw,minimum size=1cm,inner sep=0pt},  }
\begin{document}
\title[unnormalized optimal transport]{Unnormalized Optimal Transport}
\author[Gangbo]{Wilfrid Gangbo}
\author[Li]{Wuchen Li}
\author[Osher]{Stanley Osher}
\author[Puthawala]{Michael Puthawala}
\email{wgangbo@math.ucla.edu}
\email{wcli@math.ucla.edu}
\email{sjo@math.ucla.edu}
\email{mputhawala@math.ucla.edu}
\address{Mathematics department, University of California, Los Angeles}
\newcommand{\vr}{\overrightarrow}
\newcommand{\wt}{\widetilde}
\newcommand{\dd}{\mathcal{\dagger}}
\newcommand{\ts}{\mathsf{T}}
\keywords{Optimal transport; Unnormalized density space; Unnormalized Monge-Amp{\'e}re equation.}
\thanks{The research is supported by AFOSR MURI FA9550-18-1-0502.}
\maketitle
\begin{abstract}
We propose an extension of the computational fluid mechanics approach to the Monge-Kantorovich mass transfer problem, which was developed by Benamou-Brenier in \cite{bb}. Our extension allows optimal transfer of unnormalized and unequal masses. We obtain a one-parameter family of simple modifications of the formulation in \cite{bb}. This leads us to a new Monge-Amp{\'e}re type equation and a new Kantorovich duality formula. These can be solved efficiently by, for example, the Chambolle-Pock primal-dual algorithm \cite{CP}. This solution to the extended mass transfer problem gives us a simple metric for computing the distance between two unnormalized densities. 
The $L_1$ version of this metric was shown in \cite{PO} (which is a precursor of our work here) to have desirable properties. 
\end{abstract}
\section{Introduction}
 Optimal transport (OT) plays important roles in inverse problems \cite{Yang2, Yang1} and machine learning \cite{WGAN, LWL, Wproximal}. It provides a particular distance function, called the Wasserstein metric or Earth Mover's distance, among histograms or density functions \cite{bb, vil2008}. In these traditional settings, it assumes that histograms or densities have the same total mass.  In real applications, we face a situation where the total mass of each histogram is not equal. For example, when comparing two images, their intensities are not the same. This fact prevents us from applying the classical optimal transport.
  
In this paper, we formulate simple and natural extensions of optimal transport in unnormalized density space. In a word, we add a spatial independent source function into the continuity equation and cost functional. There are two benefits of the current approach. On the one hand, the changes of the variational problem are simple. They define a robust $L^p$ Wasserstein metric in unnormalized density space and do not significantly change the computational complexity of the problem. The proposed model allows us to apply classical algorithms, such as the Chambolle-Pock primal-dual method \cite{CP}, to solve it. On the other hand, the proposed problem is natural in that it uses the key Hamilton-Jacobi equation as in the original optimal transport problem. These properties allow us to identify new problems corresponding to the Monge problem and Monge-Amp{\'e}re equation in unnormalized density space.

There have been various extensions of optimal transport for unnormalized or unbalanced densities \cite{B1, Benamou, WF, MC, FG, FG2, Mielke, PR, PR2, RL, D1}. In particular, \cite{WF, WF2, Mielke} propose the Wasserstein-Fisher-Rao or Hellinger--Kantorovich metric\footnote{In the literature, the Wasserstein-Fisher-Rao metric is called unbalanced OT. To distinguish with their approaches, we call our approach unnormalized OT.}. In their studies, a spatially dependent source function is introduced, which is a ratio involving the density in the spatial domain. In addition, \cite{CL} and \cite{Maas} study other spatially dependent source functions. Here we propose a spatially independent source function which keeps the key Hamilton-Jacobi equation as in the normalized case. This property allows us to design a simple algorithm and to derive a reasonable simple unnormalized Monge-Amp{\'e}re equation. 
 
The plan of this paper is as follows. In section \ref{section2}, we propose and study the properties of the unnormalized dynamical optimal transport problem. The unnormalized Monge problem, Monge-Amp{\'e}re equation and Kantorovich formulations are all derived. In section \ref{section4}, we present the algorithms and numerical examples for this proposed metric. 
\section{Unnormalized optimal transport}\label{section2}
In this section, we introduce unnormalized OT problems and show that the proposed unnormalized metric is well defined. We then derive minimization procedures for unnormalized optimal transport. 

Denote $\Omega\subset\mathbb{R}^d$ as a bounded convex domain with area $|\Omega|$. 
Denote the space of normalized densities by
\begin{equation*}
\mathcal{P}(\Omega)=\{\mu\in L^1(\Omega)\colon \mu(x)\geq 0,~\int_ \Omega\mu(x)dx=1\}.
\end{equation*}
Let the space of unnormalized densities be
\begin{equation*}
\mathcal{M}(\Omega)=\{\mu\in L^1(\Omega)\colon \mu(x)\geq 0\}.
\end{equation*}
We note that $\mathcal{P}(\Omega)\subset \mathcal{M}(\Omega)$. We next define the optimal transport cost between $\mu_0,\mu_1\in \mathcal{M}(\Omega)$. 
\begin{definition}[Unnormalized OT]\label{def1}
Define the $L^p$ unnormalized Wasserstein distance $UW_p\colon$ $\mathcal{M}(\Omega)\times \mathcal{M}(\Omega)\rightarrow \mathbb{R}$ by
\begin{subequations}\label{UOTT}
\begin{equation}\label{UOT}
UW_p(\mu_0,\mu_1)^p=\inf_{v,\mu,f}\int_{0}^1\int_\Omega \|v(t,x)\|^{p}\mu(t,x)dxdt+\frac{1}{\alpha} \int_0^1|f(t)|^pdt\cdot |\Omega|
\end{equation}
such that the dynamical constraint, i.e. the unnormalized continuity equation, holds
\begin{equation}\label{UC}
 \partial_t\mu(t,x)+\nabla\cdot(\mu(t,x)v(t,x))= f(t),~\mu(0,x)=\mu_0(x),~\mu(1,x)=\mu_1(x).
\end{equation}
\end{subequations}
Here $\|\cdot\|$ is the Euclidean norm, $\mu_0$, $\mu_1\in\mathcal{M}(\Omega)$, and the infimum is taken over all continuous unnormalized density functions $\mu\colon [0,1]\times \Omega\rightarrow \mathbb{R}$, and Borel vector fields $v\colon [0,1]\times\Omega\rightarrow \mathbb{R}^d$ with zero flux condition $v(t,x)\cdot n(t,x)=0$ on $(0,1)\times \partial \Omega$ with $n(t,x)$ being the normal vector on the boundary of $\Omega$, 
and Borel spatially independent source functions $f\colon [0,1]\rightarrow \mathbb{R}$.  
\end{definition}
The new proposed $L^p$ Wasserstein metric has an attractive physical interpretation. The above optimization problem can be viewed as a variational fluid dynamics problem in Eulerian coordinates. Definition \ref{UOTT} considers the motion, creation and removal of particles. During this process, the total mass is changing dynamically in a uniform manner, controlled by the positive parameter $\alpha$ and a spatially independent function $f(t)$. We remark that the spatial independence of the source function introduces a very important natural property, which we will repeat. It uses the same Hamilton-Jacobi equation as in the classical optimal transport, which allows us to obtain a new Monge problem, Monge-Amp{\'e}re equation and Kantorovich duality problem. In addition, this physical analogy follows approaches in \cite{LiG}. More interestingly, we notice that problem \eqref{UOTT} has essentially the same computational complexity as the classical dynamical optimal transport problem. We will present computational details in section \ref{section4}.

\subsection{$L^1$ unnormalized Wasserstein metric} 
We first study the $L^1$ unnormalized Wasserstein metric. When $p=1$, the problem \eqref{UOT} becomes:
\begin{equation*}
\begin{split}
\textrm{UW}_1(\mu_0,\mu_1)=&\inf_{v(t,x), f(t)}\Big\{\int_{0}^1\int_\Omega \|v(t,x)\|\mu(t,x)dxdt+\frac{1}{\alpha} \int_0^1|f(t)|dt\cdot |\Omega|\colon\\
&\hspace{1.2cm} \partial_t\mu(t,x)+\nabla\cdot(\mu(t,x)v(t,x))=f(t),~\mu(0,x)=\mu_0(x),~\mu(1,x)=\mu_1(x)\Big\}.
\end{split}
\end{equation*}
Denote 
\begin{equation*}
m(x)=\int_0^1 v(t,x)\mu(t,x)dt, 
\end{equation*}
then by Jensen's inequality, the minimizer is obtained by a time independent solution. In other words, $$\int_0^1\int_\Omega\|v(t,x)\|\mu(t,x)dxdt\geq \int_\Omega \|\int_0^1 v(t,x)\mu(t,x)dt\|dx=\int_\Omega \|m(x)\|dx.$$
By integrating the time variable in the constraint, we observe that 
\begin{equation*}
\begin{split}
&\Big\{\int_{0}^1\int_\Omega \|v(t,x)\|\mu(t,x)dxdt+\frac{1}{\alpha} \int_0^1|f(t)|dt\cdot |\Omega|\colon\\
& \partial_t\mu(t,x)+\nabla\cdot(\mu(t,x)v(t,x))=f(t),~\mu(0,x)=\mu_0(x),~\mu(1,x)=\mu_1(x)\Big\}\\
\geq& \Big\{\int_\Omega \|m(x)\|dx+ \frac{1}{\alpha} \int_0^1|f(t)|dt\cdot |\Omega| \colon \mu_1(x)-\mu_0(x)+\int_0^1f(t)dt+\nabla\cdot m(x)=0\Big\}\\
\geq& \Big\{\int_\Omega \|m(x)\|dx+ \frac{1}{\alpha} \Big|\int_0^1f(t)dt\Big|\cdot |\Omega| \colon \mu_1(x)-\mu_0(x)+\int_0^1f(t)dt+\nabla\cdot m(x)=0\Big\}.
\end{split}
\end{equation*}
Denote $c=\int_0^1 f(t)dt$,  by integrating on both time and spatial domain for continuity equation \eqref{UC}, it is clear that $$c=\frac{1}{|\Omega|}\Big(\int_\Omega \mu_0(x)dx-\int_\Omega\mu_1(x)dx\Big).$$ We can show that the minimizer path can be attained in the last inequality, by choosing $\mu(t,x)=(1-t)\mu_0(x)+t\mu_1(x)$.
Thus we derive the following proposition. 
\begin{proposition}\label{prop2}
The $L^1$ unnormalized Wasserstein metric is given by 
\begin{equation*}
\begin{split}
UW_1(\mu_0,\mu_1)=\inf_{m} \Big\{&\int_\Omega \|m(x)\|dx+ \frac{1}{\alpha} \Big|\int_\Omega \mu_0(x)dx-\int_\Omega\mu_1(x)dx\Big| \colon \\
&\mu_1(x)-\mu_0(x)+\frac{1}{|\Omega|}\Big(\int_\Omega \mu_0(x)dx-\int_\Omega\mu_1(x)dx\Big)+\nabla\cdot m(x)=0\Big\}.
\end{split}
\end{equation*}
\end{proposition}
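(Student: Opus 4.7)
The proposition is an equality, so the plan is to prove the two matching inequalities $\leq$ and $\geq$; the prose before the statement already contains the skeleton, and my job is to make the steps rigorous and flag where care is needed.

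For the lower bound ($UW_1 \geq$ the RHS), I would take any admissible triple $(v,\mu,f)$ for the dynamical problem, set $m(x):=\int_0^1 v(t,x)\mu(t,x)\,dt$, and apply the vector-valued Jensen (triangle) inequality in the time variable to obtain
\[
\int_0^1\!\!\int_\Omega \|v(t,x)\|\,\mu(t,x)\,dx\,dt \;\geq\; \int_\Omega \Big\|\int_0^1 v(t,x)\mu(t,x)\,dt\Big\|\,dx \;=\; \int_\Omega \|m(x)\|\,dx.
\]
Integrating the continuity equation \eqref{UC} in $t\in[0,1]$ yields $\mu_1-\mu_0+c+\nabla\!\cdot\! m=0$ with $c:=\int_0^1 f(t)\,dt$, and then integrating over $\Omega$ together with the zero-flux condition on $\partial\Omega$ pins down $c=\frac{1}{|\Omega|}\bigl(\int_\Omega\mu_0-\int_\Omega\mu_1\bigr)$. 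A second triangle inequality gives $\int_0^1|f(t)|\,dt \geq |c|$, so the dynamical cost dominates the static cost displayed in the proposition; since the PDE constraint in the proposition is exactly the one just derived, this proves $\geq$.

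For the upper bound ($UW_1 \leq$ the RHS) I would reverse-engineer an admissible triple from an almost-minimizing $m$: take the linear interpolation $\mu(t,x):=(1-t)\mu_0(x)+t\mu_1(x)$, the constant-in-time source $f(t):=c$, and the vector field $v(t,x):=m(x)/\mu(t,x)$. A direct substitution shows the continuity equation reduces to the proposition's constraint and hence is satisfied, while
\[
\int_0^1\!\!\int_\Omega \|v\|\mu\,dx\,dt=\int_\Omega\|m(x)\|\,dx,\qquad \frac{1}{\alpha}\int_0^1|f(t)|\,dt\cdot|\Omega|=\frac{1}{\alpha}|c|\cdot|\Omega|,
\]
which matches the objective in the proposition exactly.

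The main obstacle is the division $m/\mu$ in the reconstruction step: the linear interpolation $\mu_t$ can vanish on a set of positive measure, so $v$ is not literally defined there. The clean way around this is to work throughout with the momentum $m$ (and time-momentum $c$) rather than with the velocity $v$ and source $f$, reading $\int\|v\|\mu$ as the convex, positively one-homogeneous functional $\int\|m\|\,\mathbf{1}_{\{\mu>0\}} + \infty\cdot\mathbf{1}_{\{\mu=0,\,m\neq 0\}}$ in the Benamou--Brenier sense; alternatively, one perturbs $\mu_0,\mu_1$ by $\varepsilon>0$ to make them strictly positive, performs the construction, and passes to the limit $\varepsilon\to 0$ using lower semicontinuity of the cost. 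Either way, once this technicality is dispatched, the two inequalities match and the proposition follows.
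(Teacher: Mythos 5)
Your proposal follows essentially the same route as the paper: the lower bound via Jensen's (triangle) inequality applied to $m(x)=\int_0^1 v\mu\,dt$, the constraint and the value of $c$ obtained by integrating the continuity equation in time and space, and the upper bound realized by the linear interpolation $\mu_t=(1-t)\mu_0+t\mu_1$ with constant source $f\equiv c$. Your explicit handling of the degenerate set $\{\mu=0\}$ via the momentum (Benamou--Brenier) reformulation or an $\varepsilon$-perturbation is a welcome extra precaution that the paper passes over silently, but it does not change the argument.
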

In addition, in one space dimension on the interval $\Omega=[0,1]$, the $L^1$ unnormalized Wasserstein metric has the following explicit solution: 
\begin{equation*}
\begin{split}
UW_1(\mu_0,\mu_1)=&\int_\Omega \Big|\int^x_0\mu_1(y)dy-\int^x_0\mu_0(y)dy- x\int_\Omega(\mu_1(z)-\mu_0(z))dz\Big| dx\\
&+\frac{1}{\alpha} \Big( \Big|\int_\Omega\mu_1(z)dz-\int_\Omega\mu_0(z)dz\Big|\Big).
\end{split}
\end{equation*}

The formulation in proposition \ref{prop2} has been proposed in \cite{PO} for inverse problems. It is one of the prime motivations for this paper. We also note the minimizer satisfies the following form \cite{LiRyuOsherYinGangbo2018_parallel}: 
\begin{equation*}
\left\{
\begin{aligned}
&\frac{m(x)}{\|m(x)\|}=\nabla\Phi(x),\quad \textrm{if $\|m(x)\|\neq 0$}\\
-&\nabla\cdot m(x)=\mu_1(x)-\mu_0(x)+\frac{1}{|\Omega|}\Big(\int_\Omega \mu_0(x)dx-\int_\Omega\mu_1(x)dx\Big).
\end{aligned}\right.
\end{equation*}

\subsection{$L^2$ unnormalized Wasserstein metric}
We next present the result when $p=2$. Similar derivations can also be established for $p\in (1,\infty)$. For simplicity of presentation, we now assume $|\Omega|=1$. 
\begin{proposition}\label{thm2}
The $L^2$ unnormalized Wasserstein metric \eqref{def1} is a well-defined metric function in $\mathcal{M}(\Omega)$. In addition, the minimizer $(v(t,x), \mu(t,x), f(t))$ for problem \eqref{UOTT} satisfies $$v(t,x)=\nabla\Phi(t,x),\quad f(t)=\alpha\int_\Omega\Phi(t,x)dx,$$ and  
\begin{equation}\label{minimizer1}
\left \{
\begin{aligned}
&\partial_t\mu(t,x)+\nabla\cdot (\mu(t,x)\nabla\Phi(t,x))= \alpha\int_\Omega\Phi(t,x)dx \\
&\partial_t\Phi(t,x)+\frac{1}{2}\|\nabla\Phi(t,x)\|^2\leq 0\\
&\mu(0,x)=\mu_0(x),\quad \mu(1,x)=\mu_1(x).
\end{aligned}
\right.
\end{equation}
In particular, if $\mu(t,x)>0$, then 
\begin{equation}\label{HJBM}
\partial_t\Phi(t,x)+\frac{1}{2}\|\nabla\Phi(t,x)\|^2=0.
\end{equation}
\end{proposition}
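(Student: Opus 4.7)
I would split the proposition into (a) the four metric axioms and (b) the Euler-Lagrange system \eqref{minimizer1}. For (a), the natural setting is the convex reformulation via the momentum $m:=\mu v$, under which the action
$$\mathcal{A}(\mu,m,f)=\int_0^1\!\!\int_\Omega \frac{\|m\|^2}{\mu}\,dx\,dt+\frac{1}{\alpha}\int_0^1 |f(t)|^2\,dt$$
is jointly convex and lower semicontinuous in $(\mu,m,f)$ and the constraint \eqref{UC} becomes linear. Nonnegativity is immediate; symmetry follows from the time reversal $(t,x)\mapsto(1-t,x)$ combined with $(m,f)\mapsto(-m,-f)$; the trivial constant path gives $UW_2(\mu_0,\mu_0)=0$, and conversely if the infimum is zero, then a direct-method argument on a minimizing sequence forces $m\equiv 0$ and $f\equiv 0$, whence $\partial_t\mu=0$ and $\mu_0=\mu_1$. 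The triangle inequality follows by gluing optimal paths from $\mu_0$ to $\mu_1$ on $[0,\lambda]$ and from $\mu_1$ to $\mu_2$ on $[\lambda,1]$, exploiting the $p=2$ homogeneity of $\mathcal{A}$ under affine time-reparametrization and then optimizing the split parameter $\lambda\in(0,1)$ exactly as in the classical Benamou-Brenier argument.

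For (b), I would introduce a Lagrange multiplier $\tilde{\Phi}(t,x)$ for the constraint \eqref{UC} and, after integrating by parts in $t$ and $x$ (using the prescribed endpoints and the zero-flux boundary condition), form the saddle functional
$$\int_0^1\!\!\int_\Omega\Bigl(\|v\|^2\mu-\mu\,\partial_t\tilde{\Phi}-\mu\,v\cdot\nabla\tilde{\Phi}\Bigr)dx\,dt+\int_0^1\!\!\Bigl(\tfrac{1}{\alpha}|f|^2-f\!\!\int_\Omega\!\tilde{\Phi}\,dx\Bigr)dt.$$
The $v$-variation on $\{\mu>0\}$ forces $v=\tfrac{1}{2}\nabla\tilde{\Phi}$, the $f$-variation forces $f=\tfrac{\alpha}{2}\int_\Omega\tilde{\Phi}\,dx$, and the interior $\mu$-variation forces $\|v\|^2-\partial_t\tilde{\Phi}-v\cdot\nabla\tilde{\Phi}=0$. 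Combining the last with the formula for $v$ yields $\partial_t\tilde{\Phi}+\tfrac{1}{4}\|\nabla\tilde{\Phi}\|^2=0$. Rescaling $\Phi:=\tilde{\Phi}/2$ absorbs the factors of two and recovers precisely $v=\nabla\Phi$, $f=\alpha\int_\Omega\Phi\,dx$, and the Hamilton-Jacobi identity \eqref{HJBM} on $\{\mu>0\}$. On $\{\mu=0\}$ only one-sided variations $\delta\mu\geq 0$ are admissible, so the KKT condition returns the inequality $\partial_t\Phi+\tfrac{1}{2}\|\nabla\Phi\|^2\leq 0$ in \eqref{minimizer1}.

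\textbf{Main obstacle.} The most delicate step is making the $\mu$-variation rigorous on the degenerate set $\{\mu=0\}$ and establishing existence of a minimizer in the first place. Both issues are handled cleanly inside the convex $(\mu,m,f)$ formulation: coercivity of $\mathcal{A}$ in the momentum together with convexity and lower semicontinuity gives existence via the direct method, and the HJ inequality is then recovered as a dual feasibility condition rather than through pointwise differentiation. A secondary technical point is that the concatenation argument for the triangle inequality must preserve admissibility of the source $f$; this is straightforward because $f$ is spatially independent and $\int_0^1|f|^2\,dt$ scales under time dilation in the same way as the kinetic term, yielding the required Minkowski-type bound.
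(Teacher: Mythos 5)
Your variational derivation in part (b) is essentially the paper's argument: the paper also passes to the convex variables $(\mu,m,f)$ with $m=\mu v$, introduces a Lagrange multiplier $\Phi$ for the constraint \eqref{UC}, and reads off $m/\mu=\nabla\Phi$, $f=\alpha\int_\Omega\Phi\,dx$ and the Hamilton--Jacobi relation from the first-order conditions (it works with $\tfrac{\|m\|^2}{2\mu}+\tfrac{1}{2\alpha}f^2$ so no final rescaling of the multiplier is needed, but your $\tilde\Phi\mapsto\tilde\Phi/2$ normalization is equivalent, and your sign for the one-sided condition on $\{\mu=0\}$ is the correct one, matching the dual feasibility constraint). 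Your treatment of the metric axioms is in fact more detailed than the paper's, which dismisses them as following ``directly from the definition''; the time-reversal symmetry, the $m\equiv 0$, $f\equiv 0$ argument for indiscernibility, and the concatenation-plus-time-dilation argument for the triangle inequality (noting that $\int_0^1|f|^2dt$ scales like the kinetic term) are all sound.

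There is, however, one concrete gap: you never show that the admissible set is non-empty with finite cost, i.e.\ that two densities of \emph{different total mass} can actually be joined by a path satisfying \eqref{UC}. This is the one place where the unnormalized setting requires a genuinely new step --- without it the infimum could be $+\infty$ and ``well-defined metric'' fails, and your direct-method existence argument has no finite-energy competitor to start from. The paper supplies this with an explicit three-stage path: on $[0,\tfrac13]$ run classical optimal transport from $\mu_0$ to the uniform density of mass $\int_\Omega\mu_0\,dx$ with $f\equiv 0$; on $[\tfrac13,\tfrac23]$ keep $m\equiv 0$ and use the constant source $f(t)=3\bigl(\int_\Omega\mu_1\,dx-\int_\Omega\mu_0\,dx\bigr)$ to move between the two uniform densities; on $[\tfrac23,1]$ run classical optimal transport to $\mu_1$ with $f\equiv 0$. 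You should insert such a construction (or an equivalent one) before invoking coercivity and lower semicontinuity.
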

\noindent\textbf{Remark:}
{\em
We note that equation \eqref{minimizer1} implies 
$$\alpha \int_0^1\int_\Omega \Phi(t,x)dx dt=\int_\Omega \mu_1(y)dy-\int_\Omega \mu_0(y)dy.$$
This means that unlike the classical OT, we are not only solving for the unique $\nabla\Phi$, but also for \textbf{the unique $\Phi$}.
}

\begin{proof}
Denote $m(t,x)=\mu(t,x) v(t,x)$ and 
\begin{equation*}
F(m,\mu)=\begin{cases}
\frac{\|m\|^2}{\mu}& \textrm{if $\mu>0$;}\\
0 & \textrm{if $\mu=0$, $m=0$;}\\
+\infty& \textrm{Otherwise.}
\end{cases}
\end{equation*}
then variational problem \eqref{UOTT} can be reformulated as 
\begin{equation}\label{UOT1}
\begin{split}
\textrm{UW}_2(\mu_0,\mu_1)^2=&\inf_{m,\mu,f}\Big\{\int_{0}^1\int_\Omega F(m(t,x), \mu(t,x))dxdt+\frac{1}{\alpha} \int_0^1|f(t)|^2 dt\colon\\
& \partial_t\mu(t,x)+\nabla\cdot(\mu(t,x)v(t,x))= f(t),~\mu(0,x)=\mu_0(x),~\mu(1,x)=\mu_1(x)\Big\}.
\end{split}
\end{equation}
It is clear that \eqref{UOT1} is the reformulation of \eqref{UOTT}. We first prove that the variational problem \eqref{UOT1} is well defined. In other words, there exists a feasible path for the dynamical constraint. We construct a feasible path $\mu_t$ connecting any $\mu_0$, $\mu_1\in \mathcal{M}(\Omega)$.
The proof is divided into three steps. 

\noindent Step 1. Construct a density path $t\in [0, \frac{1}{3}]$, there exists a feasible path connecting $\mu_0$ and a uniform measure with total mass $\int_\Omega\mu_0dx$. In this case, the density path is a normalized (classical) OT between two densities. We set $f(t)=0$ when $t=[0, 1/3]$, there always exists such a path.

\noindent Step 2. Construct a density path $t\in [\frac{1}{3}, \frac{2}{3}]$, there exists a feasible path connecting a uniform measure with total mass $\int_\Omega\mu_0dx$ and a uniform measure with total mass $\int_\Omega\mu_1dx$. In this case, we let the transport flux $m(t,x)=0$, and choose $f(t)=3 (\int_\Omega \mu^1(x)dx- \int_\Omega \mu^0(x)dx)$. 

\noindent Step 3. Construct a density path $t\in [\frac{2}{3}, 1]$, there exists a feasible path connecting a uniform measure with total mass $\int_\Omega\mu_1dx$ and $\mu_1$. In this case, we set $f(t)=0$. Following the classical OT, we find a feasible path. 

Combining steps 1,2,3, the proposed path is feasible with finite cost functional. We next show that the problem has a minimizer. Since the constraint set is not empty, then it is classical to show the cost functional $F(m,\mu)+\frac{1}{\alpha}f(t)^2$ is convex and is lower semicontinuous, while the constraint is linear. So the variational problem \eqref{minimizer1} has a minimizer. 

We next apply a Lagrange multiplier to find the minimizer. Denote $\Phi(t,x)$ as the multiplier with
\begin{equation*}
\begin{split}
\mathcal{L}(m, \mu,\Phi)=&\int_0^1\int_\Omega \frac{\|m(t,x)\|^2}{2\mu(t,x)}+\Phi(t,x)\Big(\partial_t\mu(t,x)+\nabla\cdot m(t,x)-f(t)\Big)dxdt+\frac{1}{2\alpha}\int_0^1 f(t)^2dt.
\end{split}
\end{equation*}
Assuming $\delta_m \mathcal{L}=0$, $\delta_\mu\mathcal{L}\leq 0$, $\delta_f\mathcal{L}=0$, we derive the property of minimizer as follows:
\begin{equation*}
\left\{\begin{split}
&\frac{m(t,x)}{\mu(t,x)}=\nabla\Phi(t,x)\\
&-\frac{m(t,x)^2}{2\mu(t,x)^2}-\partial_t\Phi(t,x)\leq 0\\
&f(t)=\alpha\int_\Omega \Phi(t,x)dx.
\end{split}
\right.
\end{equation*}
Here if $\mu>0$, we obtain $\delta_\mu\mathcal{L}=0$, which gives equality in the second formula of the above system. Using the fact $\frac{m(t,x)}{\mu(t,x)}=\nabla\Phi(t,x)$, we prove the result. In this case, the non-negativity, symmetry, triangle inequality of the metric follow directly from the definition. 
\end{proof}

We next derive our new Monge problem for unnormalized OT. This approach uses the Lagrange coordinates arising in problem \eqref{UOTT}. 
\begin{proposition}[Unnormalized Monge problem]
\begin{subequations}
\begin{equation}\label{UM}
\begin{split}
\textrm{UW}_2(\mu_0,\mu_1)^2=&\inf_{M, f(t)}~\int_\Omega \|M(x)-x\|^2 \mu_0(x)dx+\alpha\int_0^1 f(t)^2 dt\\
&+\int_0^1 \int_0^t f(s) \int_\Omega\|M(x)-x\|^2\textrm{Det}\Big(s\nabla M(x)+(1-s)\mathbb{I}\Big)ds dt dx, 
\end{split}
\end{equation}
where the infimum is among all one to one, invertible mapping functions $M\colon \Omega\rightarrow \Omega$ and a source function $f\colon \Omega\rightarrow \mathbb{R}$, such that the unnormalized push forward relation holds 
\begin{equation}\label{UMC}
\mu(1,M(x))\textrm{Det}(\nabla M(x))=\mu(0, x)+\int_0^1 f(t) \textrm{Det}\Big(t\nabla M(x)+(1-t) \mathbb{I}\Big)dt.
\end{equation}
\end{subequations}
\end{proposition}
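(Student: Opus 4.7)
My plan is to pass from the Eulerian variational problem \eqref{UOTT} to a Lagrangian one by following the characteristics of the optimal velocity field, using the first-order conditions supplied by Proposition \ref{thm2}. I would introduce the flow map $X(t,x)$ of the optimal vector field $v$, defined by $\dot X(t,x)=v(t,X(t,x))$ with $X(0,x)=x$, and then set $M(x):=X(1,x)$.

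The first key step is to note that the Hamilton--Jacobi equation \eqref{HJBM} forces characteristics to be straight lines. Differentiating $v(t,X(t,x))=\nabla\Phi(t,X(t,x))$ in $t$ yields
\begin{equation*}
\tfrac{d}{dt}v(t,X(t,x))=\bigl(\partial_t\nabla\Phi+(\nabla\Phi\cdot\nabla)\nabla\Phi\bigr)\bigl(t,X(t,x)\bigr)=\nabla\bigl(\partial_t\Phi+\tfrac12\|\nabla\Phi\|^2\bigr)\bigl(t,X(t,x)\bigr)=0,
\end{equation*}
so $v$ is constant along each characteristic. Consequently $X(t,x)=(1-t)x+tM(x)$, $\nabla_xX(t,x)=t\,\nabla M(x)+(1-t)\mathbb{I}$, and $\|v(t,X(t,x))\|=\|M(x)-x\|$ for all $t\in[0,1]$.

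Next I would derive constraint \eqref{UMC}. Setting $J(t,x):=\det\nabla_xX(t,x)$ and using $\partial_tJ=J\,(\nabla\cdot v)(t,X(t,x))$, the continuity equation in \eqref{UC} pulled back along $X$ becomes
\begin{equation*}
\partial_t\bigl[\mu(t,X(t,x))\,J(t,x)\bigr]=\bigl[\partial_t\mu+\nabla\cdot(\mu v)\bigr](t,X(t,x))\,J(t,x)=f(t)\,J(t,x),
\end{equation*}
whose integration over $[0,1]$, together with $\mu(0,\cdot)=\mu_0$ and $\mu(1,\cdot)=\mu_1$, delivers \eqref{UMC}. Applying the change of variables $y=X(t,x)$ in the transport part of \eqref{UOT} and substituting $\mu(t,X(t,x))J(t,x)=\mu_0(x)+\int_0^tf(s)J(s,x)\,ds$ produces, after Fubini, precisely the two $\|M(x)-x\|^2$ terms in \eqref{UM}; the source penalty carries over unchanged.

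To close the variational identity I would also treat the reverse inclusion: given any admissible $(M,f)$ for \eqref{UM}--\eqref{UMC}, set $X(t,x)=(1-t)x+tM(x)$, reconstruct $\mu$ via the Jacobian ODE and define $v$ by $v(t,X(t,x))=M(x)-x$, and then verify admissibility in \eqref{UOTT} with matching cost. The main obstacle in both directions is ensuring that $\nabla_xX(t,x)=t\,\nabla M(x)+(1-t)\mathbb{I}$ remains nonsingular (with positive determinant) for every $t\in[0,1]$, so that the straight-line interpolation is a bijection and the formulas above are meaningful. In analogy with Brenier's theorem, this should follow from showing that the optimal $M$ has the form $\nabla\varphi$ for a convex $\varphi$, so that $\nabla M$ is symmetric positive semidefinite; the structure $M(x)=x+\nabla\Phi(0,x)$ suggests this, but justifying it rigorously requires a $c$-concavity or semiconvexity argument adapted to the spatially independent source term, with additional care on the set where $\mu_0$ vanishes.
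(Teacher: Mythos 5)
Your proposal follows essentially the same route as the paper's proof: pass to Lagrangian coordinates along the flow map, use the Hamilton--Jacobi equation \eqref{HJBM} to conclude $\tfrac{d^2}{dt^2}X_t(x)=0$ and hence $X_t(x)=(1-t)x+tM(x)$, derive the ODE $\partial_t[\mu(t,X_t(x))\det\nabla X_t(x)]=f(t)\det\nabla X_t(x)$ via the Jacobi identity to obtain \eqref{UMC}, and change variables in the kinetic term to obtain \eqref{UM}. The only differences are that you additionally address the converse inclusion and the nonsingularity of $t\nabla M(x)+(1-t)\mathbb{I}$, points the paper's proof leaves implicit; these are welcome refinements rather than a different argument.
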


\begin{proof}
We now derive the Lagrange formulation of the unnormalized OT \eqref{UOTT}. Consider any mapping function $X_t(x)$ with vector field $v(t, X_t(x))$, i.e. 
\begin{equation*}
\frac{d}{dt}X_t(x)=v(t, X_t(x)),\quad X_0(x)=x.
\end{equation*}
Then 
\begin{equation}\label{new}
\begin{split}
\int_{0}^1\int_\Omega \|v(t,x)\|^2\mu(t,x)dxdt=&\int_{0}^1\int_\Omega \|v(t,X_t(x))\|^2\mu(t,X_t(x))dX_t(x)dt\\
=&\int_0^1 \int_\Omega \|\frac{d}{dt}X_t(x)\|^2\mu(t, X_t(x))\textrm{Det}\Big(\nabla X_t(x)\Big)dx dt.
\end{split}
\end{equation}
We next derive the differential equation for $J(t,x):=\mu(t, X_t(x))\textrm{Det}\Big(\nabla X_t(x)\Big)$. Later on, we use the notation $J(t)=J(t,x)$ and $\frac{d}{dt}J(t)=\frac{\partial}{\partial t}J(t,x)$.
Since
\begin{equation*}
\begin{split}
\frac{d}{d t}J(t,x)=&\frac{d}{d t}\Big\{\mu(t, X_t(x))\textrm{Det}\Big(\nabla X_t(x)\Big)\Big\}\\
=&\partial_t\mu(t, X_t(x)) \textrm{Det}\Big(\nabla X_t(x)\Big) +\nabla_X \mu(t, X_t(x))\frac{d}{dt}X_t(x)\textrm{Det}\Big(\nabla X_t(x)\Big)\\
&+\mu(t, X_t(x))\partial_t \textrm{Det}\Big(\nabla X_t(x)\Big)\\
=&\Big\{\partial_t\mu+ \nabla\mu\cdot v+\nabla\cdot v \mu\Big\}(t, X_t(x))\textrm{Det}(\nabla X_t(x))\\
=&\Big\{\partial_t\mu+\nabla\cdot(\mu v)\Big\}(t, X_t(x))\textrm{Det}(\nabla X_t(x))\\
=& f(t)\textrm{Det}(\nabla X_t(x)),
\end{split}
\end{equation*}
where the third equality is derived by the Jacobi identity, i.e.
\begin{equation*}
\partial_t \textrm{Det}\Big(\nabla X_t(x)\Big)=\nabla\cdot v(t, X_t(x))\textrm{Det}\Big(\nabla X_t(x)\Big),
\end{equation*}
 and the last equality holds following our proposed continuity equation with spatial independent source function \eqref{UC}. 

Notice \begin{equation*}
J(t)=J(0)+\int_0^t \frac{d}{ds}J(s)ds.
\end{equation*}
Since $X_0(x)=x$ and $\nabla X_0(x)=\mathbb{I}$, then $J(0)=\mu(0,x)$ and 
\begin{equation*}
\mu(t, X_t(x))\textrm{Det}\Big(\nabla X_t(x)\Big)=\mu(0,x)+\int_0^t f(s)\textrm{Det}\Big(\nabla X_s(x)\Big)ds.
\end{equation*}
Since the minimizer in Eulerian coordinates satisfies the Hamilton-Jacobi equation in \eqref{HJBM}:
\begin{equation*}
\partial_t\Phi(t,x)+\frac{1}{2}\|\nabla\Phi(t,x)\|^2=0,
\end{equation*}
and $\frac{d}{dt}X_t(x)=\nabla\Phi(t, X_t(x))$, then we naturally have 
$\frac{d^2}{dt^2}X_t(x)=0$. This implies
\begin{equation*}
\frac{d}{dt}X_t(x)=v(t, X_t(x))=M(x)-x, 
\end{equation*}
thus $X_t(x)=(1-t)x+tM(x)$ and $\textrm{Det}\Big(\nabla X_t(x)\Big)=\textrm{Det}\Big((1-t)\mathbb{I}+t\nabla M(x)\Big)$.

Substituting all the above relations into \eqref{new}: 
\begin{equation*}
\begin{split}
\eqref{new}=&\int_0^1 \int_\Omega \|\frac{d}{dt}X_t(x)\|^2J(t) dx dt\\ 
=&\int_0^1 \int_{\Omega} \|M(x)-x\|^2 \Big(J(0)+\int_0^t \frac{d}{ds}J(s)ds\Big)dxdt\\
=&\int_0^1 \int_{\Omega} \|M(x)-x\|^2 J(0) dxdt+\int_0^1 \int_{\Omega} \|M(x)-x\|^2 \int_0^t \frac{d}{ds}J(s)dsdxdt\\
=&\int_0^1 \int_\Omega \|M(x)-x\|^2\mu(0,x)dx dt+ \int_0^1 \int_\Omega \|M(x)-x\|^2 \int_0^t f(s) \textrm{Det}(\nabla X_s(x))dsdxdt\\
=&\int_\Omega \|M(x)-x\|^2\mu(0,x)dx+ \int_0^1 \int_0^t \int_\Omega \|M(x)-x\|^2 f(s) \textrm{Det}\Big((1-s)\mathbb{I}+s\nabla M(x)\Big)dsdxdt.
\end{split}
\end{equation*}
Thus we prove the results. 
\end{proof}
We next find the relation between the spatial independent source function $f(t)$ and the mapping function $M(x)$. For simplicity of presentation, we assume periodic boundary conditions on $\Omega$. 
\begin{proposition}[Unnormalized Monge-Amp{\'e}re equation]
The optimal mapping function $M(x)=\nabla \Psi(x)$ satisfies the following unnormalized Monge-Amp{\'e}re equation
\begin{equation*}
\begin{split}
&\mu(1,\nabla\Psi(x))\textrm{Det}(\nabla^2\Psi(x))-\mu(0, x)\\
=&\alpha\int_0^1 \textrm{Det}\Big(t\nabla^2\Psi(x)+(1-t) \mathbb{I}\Big)\int_\Omega  \Big(\Psi(y)-\frac{\|y\|^2}{2}+\frac{t\|\nabla \Psi(y)-y\|^2}{2}\Big) \textrm{Det}\Big(t\nabla^2\Psi(y)+(1-t) \mathbb{I}\Big)dy dt.
\end{split}
\end{equation*}
\end{proposition}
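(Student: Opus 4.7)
The plan is to combine the unnormalized push-forward relation (UMC) from the previous proposition, specialized to a gradient map $M=\nabla\Psi$, with the Lagrange-multiplier identity $f(t)=\alpha\int_\Omega\Phi(t,x)\,dx$ from Proposition \ref{thm2}, and to eliminate $\Phi$ in favor of $\Psi$ using the method of characteristics for the Hamilton--Jacobi equation \eqref{HJBM}.

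First I would justify that the optimal map is a gradient. From the previous proof, along characteristics one has $\ddot X_t(x)=0$ and $\dot X_t(x)=v(t,X_t(x))=\nabla\Phi(t,X_t(x))$, so $M(x)-x=\nabla\Phi(0,x)$ is a gradient, giving $M(x)=\nabla\Psi(x)$ for some scalar $\Psi$ with $\nabla\Psi(x)=x+\nabla\Phi(0,x)$, i.e.\ $\Phi(0,x)=\Psi(x)-\tfrac12\|x\|^2$ up to an additive constant, which (as flagged in the Remark) is pinned down by the equation since the unnormalized OT determines $\Phi$ itself, not just $\nabla\Phi$. With $M=\nabla\Psi$, the push-forward relation \eqref{UMC} becomes
\begin{equation*}
\mu(1,\nabla\Psi(x))\,\textrm{Det}(\nabla^2\Psi(x))-\mu(0,x)=\int_0^1 f(t)\,\textrm{Det}\!\Big(t\nabla^2\Psi(x)+(1-t)\mathbb{I}\Big)\,dt,
\end{equation*}
so it remains to compute $f(t)=\alpha\int_\Omega\Phi(t,y)\,dy$ explicitly in terms of $\Psi$.

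Next I would integrate $\Phi$ along characteristics. Since $X_t(x)=(1-t)x+t\nabla\Psi(x)$ and $\nabla\Phi(t,X_t(x))=\nabla\Psi(x)-x$ is constant in $t$, the Hamilton--Jacobi equation $\partial_t\Phi+\tfrac12\|\nabla\Phi\|^2=0$ gives
\begin{equation*}
\frac{d}{dt}\Phi(t,X_t(x))=\partial_t\Phi+\nabla\Phi\cdot\dot X_t=-\tfrac12\|\nabla\Phi\|^2+\|\nabla\Phi\|^2=\tfrac12\|\nabla\Psi(x)-x\|^2,
\end{equation*}
so $\Phi(t,X_t(x))=\Psi(x)-\tfrac12\|x\|^2+\tfrac{t}{2}\|\nabla\Psi(x)-x\|^2$. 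Changing variables $y=X_t(x)$ with Jacobian $\textrm{Det}(t\nabla^2\Psi(x)+(1-t)\mathbb{I})$ then yields
\begin{equation*}
\int_\Omega\Phi(t,y)\,dy=\int_\Omega\Big(\Psi(x)-\tfrac{\|x\|^2}{2}+\tfrac{t}{2}\|\nabla\Psi(x)-x\|^2\Big)\textrm{Det}\!\Big(t\nabla^2\Psi(x)+(1-t)\mathbb{I}\Big)dx.
\end{equation*}
Substituting this expression for $f(t)/\alpha$ back into the push-forward identity, relabeling the dummy variable $x\mapsto y$ under the $\int_\Omega$, and keeping the outer factor $\textrm{Det}(t\nabla^2\Psi(x)+(1-t)\mathbb I)$ in front gives exactly the claimed unnormalized Monge--Amp\`ere equation.

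The main obstacle I anticipate is justifying the change of variables $y=X_t(x)$: this requires $t\nabla^2\Psi(x)+(1-t)\mathbb{I}$ to be positive definite for all $t\in[0,1]$ so that $X_t$ is a diffeomorphism of $\Omega$ (the periodic boundary assumption is what lets one move freely between $\int_\Omega\cdots dy$ and $\int_\Omega\cdots dx$ without boundary terms). A secondary subtlety is the additive constant in $\Phi(0,\cdot)=\Psi-\tfrac12\|\cdot\|^2$; although the normalized theory only determines $\Psi$ up to a constant, here the value of $\int_\Omega\Phi(t,\cdot)$ feeds back into the equation through $f(t)$, so the constant is fixed by the requirement $\alpha\int_0^1\!\!\int_\Omega\Phi\,dxdt=\int_\Omega\mu_1-\int_\Omega\mu_0$ noted in the Remark, which is precisely consistent with the derived equation.
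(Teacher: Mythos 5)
Your proposal is correct and follows essentially the same route as the paper: write $M=\nabla\Psi$ with $\Phi(0,x)=\Psi(x)-\tfrac12\|x\|^2$, compute $\Phi(t,X_t(x))=\Psi(x)-\tfrac{\|x\|^2}{2}+\tfrac{t}{2}\|\nabla\Psi(x)-x\|^2$ along the straight characteristics, change variables $y=X_t(x)$ with Jacobian $\textrm{Det}\big(t\nabla^2\Psi(x)+(1-t)\mathbb{I}\big)$ to express $f(t)=\alpha\int_\Omega\Phi(t,y)\,dy$ in terms of $\Psi$, and substitute into \eqref{UMC}. The only cosmetic difference is that you integrate the Hamilton--Jacobi equation along characteristics where the paper invokes the Hopf--Lax formula directly; these yield the identical expression for $\Phi(t,X_t(x))$.
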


\begin{proof}
Let us rewrite the minimizer \eqref{minimizer1} into a time independent formulation. From the Hopf-Lax formula for the Hamilton-Jacobi equation, 
\begin{equation*}
\Phi(1, M(x))=\Phi(0, x)+\frac{\|M(x)-x\|^2}{2}.
\end{equation*}
Thus $\nabla \Phi(0,x)+x-M(x)=0$. We further denote $\Psi(x)=\Phi(0,x)+\frac{\|x\|^2}{2}$, then $M(x)=\nabla\Psi(x)$. From $X_t(x)=(1-t)x+tM(x)$, then
\begin{equation*}
\begin{split}
\Phi(t, X_t(x))=&\Phi(0,x)+\frac{\|X_t(x)-x\|^2}{2t}\\
=&\Phi(0,x)+\frac{t\|M(x)-x\|^2}{2}\\
=&\Psi(x)-\frac{\|x\|^2}{2}+\frac{t\|\nabla \Psi(x)-x\|^2}{2}
\end{split}
\end{equation*}
and \begin{equation*}
\nabla X_t(x)=(1-t)\mathbb{I}+t\nabla^2\Psi(x).
\end{equation*}
From \eqref{minimizer1} and the above two formulas, then 
\begin{equation*}
\begin{split}
f(t)=&\alpha \int_\Omega\Phi(t,x)dx= \alpha \int_\Omega \Phi(t,X_t(x))dX_t(x)\\
=&\alpha \int_\Omega \Phi(t,X_t(x))\textrm{Det}\Big(\nabla X_t(x)\Big)dx\\
=&\alpha \int_\Omega \Big\{\Psi(x)-\frac{\|x\|^2}{2}+\frac{t\|\nabla \Psi(x)-x\|^2}{2}\Big\}\textrm{Det}\Big((1-t)\mathbb{I}+t\nabla^2\Psi(x)\Big)  dx.
\end{split}
\end{equation*}
Substituting $f(t)'s$ formula and $M(x)=\nabla\Psi(x)$ into \eqref{UMC}, we derive the result. 
\end{proof}

We now present the Kantorovich duality formulation of the problem \eqref{UOTT}. 
\begin{proposition}[Unnormalized Kantorovich formulation]
\begin{subequations}
\begin{equation*}\label{UM}
\begin{split}
\frac{1}{2}\textrm{UW}_2(\mu_0,\mu_1)^2
=&\sup_{\Phi}~\Big\{\int_{\Omega}\Phi(1,x)\mu(1,x)dx-\int_\Omega \Phi(0,x)\mu(0,x)dx-\frac{\alpha}{2}\int_0^1\Big(\int_\Omega \Phi(t,x)dx\Big)^2dt\Big\}
\end{split}
\end{equation*}
where the supremum is taken among all $\Phi\colon [0,1]\rightarrow\Omega$ satisfying 
\begin{equation*}
\partial_t\Phi(t,x)+\frac{1}{2}\|\nabla\Phi(t,x)\|^2\leq 0.
\end{equation*}
\end{subequations}
\end{proposition}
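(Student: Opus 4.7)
The plan is to obtain the Kantorovich dual as the Fenchel--Rockafellar dual of the Benamou--Brenier style primal problem used in Proposition~\ref{thm2}. Starting from the reformulation $\frac{1}{2}\mathrm{UW}_2(\mu_0,\mu_1)^2 = \inf_{m,\mu,f} \int_0^1\!\int_\Omega \frac{\|m\|^2}{2\mu}\,dx\,dt + \frac{1}{2\alpha}\int_0^1 f(t)^2\,dt$ subject to $\partial_t\mu + \nabla\cdot m = f$, $\mu(0,\cdot)=\mu_0$, $\mu(1,\cdot)=\mu_1$, and $m\cdot n = 0$ on $\partial\Omega$, I would introduce $\Phi(t,x)$ as a Lagrange multiplier for the unnormalized continuity equation and form
\begin{equation*}
\mathcal{L}(m,\mu,f,\Phi) = \int_0^1\!\int_\Omega \frac{\|m\|^2}{2\mu}\,dx\,dt + \frac{1}{2\alpha}\int_0^1 f^2\,dt + \int_0^1\!\int_\Omega \Phi\bigl(\partial_t\mu + \nabla\cdot m - f\bigr)\,dx\,dt.
\end{equation*}

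The next step is to integrate by parts in both space and time, using the zero flux condition $m\cdot n = 0$ on $\partial\Omega$ to kill the spatial boundary term and the prescribed endpoint data $\mu(0,\cdot)=\mu_0$, $\mu(1,\cdot)=\mu_1$ to produce the terms $\int_\Omega \Phi(1,x)\mu_1(x)\,dx - \int_\Omega \Phi(0,x)\mu_0(x)\,dx$. After this manipulation, $\mathcal{L}$ decouples into the endpoint pairings plus pointwise expressions $\frac{\|m\|^2}{2\mu} - m\cdot\nabla\Phi - \mu\,\partial_t\Phi$ and $\frac{1}{2\alpha}f^2 - f\int_\Omega \Phi\,dx$, so I can compute $\inf_{m,\mu,f}\mathcal{L}$ variable by variable.

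Minimization in $m$ (unconstrained) gives $m = \mu\nabla\Phi$, turning the first bracket into $-\mu\bigl(\partial_t\Phi + \tfrac{1}{2}\|\nabla\Phi\|^2\bigr)$; infimization over $\mu\geq 0$ therefore equals $0$ if $\partial_t\Phi + \tfrac{1}{2}\|\nabla\Phi\|^2 \leq 0$ and $-\infty$ otherwise, which supplies the dual constraint. Minimization in $f$ (unconstrained) gives $f = \alpha\int_\Omega \Phi\,dx$, contributing $-\frac{\alpha}{2}\bigl(\int_\Omega \Phi\,dx\bigr)^2$. Assembling the pieces yields exactly the announced dual functional, and its supremum over admissible $\Phi$ is the dual value.

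The one genuine obstacle is the strong duality step $\inf\sup = \sup\inf$. I would handle this using a Fenchel--Rockafellar argument, noting that the primal integrand $F(m,\mu)+\frac{1}{2\alpha}f^2$ is convex and lower semicontinuous (as already used in the proof of Proposition~\ref{thm2}), that the constraint map is linear, and that the feasibility construction in Steps~1--3 of that same proof produces an interior point guaranteeing the Slater-type condition needed for no duality gap. Once strong duality is in place, the reduction above gives the equality $\frac{1}{2}\mathrm{UW}_2(\mu_0,\mu_1)^2 = \sup_\Phi \bigl\{\int_\Omega\Phi(1,\cdot)\mu_1 - \int_\Omega\Phi(0,\cdot)\mu_0 - \frac{\alpha}{2}\int_0^1(\int_\Omega\Phi)^2\,dt\bigr\}$ over $\Phi$ satisfying the Hamilton--Jacobi inequality, and consistency with the primal optimizer from Proposition~\ref{thm2} (namely $v=\nabla\Phi$, $f=\alpha\int_\Omega\Phi\,dx$, and \eqref{HJBM} on $\{\mu>0\}$) certifies that the supremum is attained.
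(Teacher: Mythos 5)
Your derivation of the dual functional is exactly the paper's: same Lagrangian, same integration by parts using the zero-flux and endpoint conditions, same pointwise minimizations ($m=\mu\nabla\Phi$ giving the Hamilton--Jacobi inequality as the constraint from $\inf_{\mu\ge 0}$, and $f=\alpha\int_\Omega\Phi\,dx$ giving the quadratic penalty). The one place you diverge is the strong-duality step. The paper does not invoke Fenchel--Rockafellar at all: it establishes $\inf\sup\ge\sup\inf$ (weak duality, written as ``$\ge$'' in the displayed chain) and then closes the gap by a direct computation, substituting the optimality system \eqref{minimizer1} from Proposition~\ref{thm2} into the primal cost and checking it equals the dual objective evaluated at the optimal $\Phi$, i.e.\ $\int_\Omega\Phi(1,\cdot)\mu_1-\int_\Omega\Phi(0,\cdot)\mu_0-\frac{\alpha}{2}\int_0^1(\int_\Omega\Phi)^2dt$. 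Your Slater-type justification, as stated, is the weak link: the feasibility construction in Steps~1--3 gives a feasible point, but feasibility is not interiority, and $F(m,\mu)=\|m\|^2/(2\mu)$ is only lower semicontinuous (nowhere continuous in the natural function-space topologies), so the standard constraint-qualification hypotheses of Fenchel--Rockafellar do not follow from what you cite; making that route rigorous requires the Benamou--Brenier device of dualizing through the Legendre transform of $F$ as a supremum over a paraboloid of test functions. Fortunately your closing sentence --- checking consistency with the primal optimizer of Proposition~\ref{thm2} --- is precisely the paper's mechanism and, if carried out as an explicit evaluation rather than a ``certification of attainment,'' it alone suffices to upgrade weak duality to equality. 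I would recommend promoting that computation to the main argument and dropping or substantially strengthening the Slater claim.
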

\begin{proof}
As in \cite{Gangbo1994, Gangbo1996}, we derive the duality formula by integration by parts as follows. Notice the fact that 
\begin{equation*}
\begin{split}
&\frac{1}{2}\textrm{UW}_2(\mu_0, \mu_1)^2\\
=&\inf_{m, \mu, f} \Big\{\int_0^1 \int_\Omega \frac{m(t,x)^2}{2\mu(t,x)}dxdt+\frac{1}{2\alpha} \int_0^1 f(t)^2dt\colon  \partial_t\mu+\nabla\cdot m=0,~\mu(0,x)=\mu_0(x),~\mu(1,x)=\mu_1(x)\Big\}\\
=&\inf_{m, \mu, f}\sup_{\Phi}\Big\{\int_0^1\int_\Omega \frac{m(t,x)^2}{2\mu(t,x)}+\frac{1}{2\alpha} f(t)^2+\Phi(t,x)\Big(\partial_t\mu(t,x)+\nabla\cdot m(t,x)-f(t)\Big)dxdt\Big\}\\
\geq &\sup_{\Phi}\inf_{m, \mu, f}\Big\{\int_0^1\int_\Omega \frac{m(t,x)^2}{2\mu(t,x)}+\frac{1}{2\alpha} f(t)^2+\Phi(t,x)\Big(\partial_t\mu(t,x)+\nabla\cdot m(t,x)-f(t)\Big)dxdt\Big\}\\
=&\sup_{\Phi}\inf_{m, \mu, f}\Big\{\int_0^1\int_\Omega \frac{m(t,x)^2}{2\mu(t,x)}-\nabla\Phi(t,x) \cdot m(t,x)+\frac{1}{2\alpha} f(t)^2+\Phi(t,x)\cdot \Big(\partial_t\mu(t,x)-f(t)\Big)dxdt\Big\}\\
=&\sup_{\Phi}\inf_{m, \mu, f}\Big\{\int_0^1\int_\Omega \frac{1}{2}\Big(\frac{m(t,x)}{\mu(t,x)}-\nabla\Phi(t,x)\Big)^2\mu(t,x)-\frac{1}{2}\|\nabla\Phi(t,x)\|^2\mu(t,x) dxdt\\
&\hspace{1.5cm}+\int_\Omega \Big(\Phi(1,x)\mu(1,x) -\Phi(0,x)\mu(0,x)\Big) dx\\
&\hspace{1.5cm}+\int_0^1\int_\Omega \Big(-\mu(t,x)\partial_t\Phi(t,x)+\frac{1}{2\alpha}f(t)^2-\Phi(t,x)f(t)\Big)dxdt\Big\}\\
=&\sup_{\Phi}\Big\{\int_\Omega \Big(\Phi(1,x)\mu(1,x) -\Phi(0,x)\mu(0,x)\Big) dx\\
&\qquad+\inf_{\mu} \int_0^1\int_\Omega -\mu(t,x)\Big(\partial_t\Phi(t,x)+\frac{1}{2}\|\nabla\Phi(t,x)\|^2\Big)dxdt\\
&\qquad+\inf_{f}\int_0^1\Big(\int_\Omega\frac{1}{2\alpha}f(t)^2-\Phi(t,x)f(t)\Big)dxdt\Big\}\\
=&\sup_{\Phi}\Big\{\int_\Omega \Big(\Phi(1,x)\mu(1,x) -\Phi(0,x)\mu(0,x)\Big) dx-\frac{1}{2\alpha} \int_0^1\Big(\int_\Omega \Phi(t,x)dx\Big)^2dt\\
&\qquad+\inf_{\mu}\Big\{ -\int_0^1\int_\Omega \mu(t,x)\Big(\partial_t\Phi(t,x)+\frac{1}{2}\|\nabla\Phi(t,x)\|^2\Big)dxdt\Big\}\\
&\qquad+\frac{1}{2\alpha}\inf_{f}\int_0^1\Big(f(t)-\alpha \int_\Omega \Phi(t,x) dx\Big)^2dt\Big\}\\
=&\sup_{\Phi}\Big\{\int_\Omega \Big(\Phi(1,x)\mu(1,x) -\Phi(0,x)\mu(0,x)\Big) dx-\frac{1}{2\alpha} \int_0^1\Big(\int_\Omega \Phi(t,x)dx\Big)^2dt\colon \\
&\qquad\qquad\partial_t\Phi(t,x)+\frac{1}{2}\|\nabla \Phi(t,x)\|^2\leq 0 \Big\}.
\end{split}
\end{equation*}
We have shown that the minimizer over $m$ is obtained at $\frac{m}{\mu}=\nabla \Phi$, and $f(t)=\alpha \int_\Omega \Phi(t,x)dx$. The last equality holds because $\mu(t,x)\geq 0$, thus $\partial_t\Phi(t,x)+\frac{1}{2}\|\nabla\Phi(t,x)\|^2\leq 0$. 

We next show that the primal-dual gap is zero. From proposition \ref{thm2}, the minimizer $(\mu, \Phi)$ satisfies \eqref{minimizer1}. Thus  
\begin{equation*}
\begin{split}
&\int_0^1\int_\Omega \frac{m(t,x)^2}{2\mu(t,x)}dxdt+\frac{1}{2\alpha} \int_0^1 f(t)^2dt\\
=&\int_0^1\int_\Omega \frac{1}{2}\|\nabla\Phi(t,x)\|^2\mu(t,x)dxdt+\frac{\alpha}{2}\int_0^1 \Big(\int_\Omega\Phi(t,x)dx\Big)^2dt\\
=&\int_0^1\int_\Omega \Big(- \frac{1}{2} \|\nabla\Phi(t,x)\|^2\mu(t,x)+\|\nabla\Phi(t,x)\|^2\mu(t,x)\Big) dxdt+\frac{\alpha}{2}\int_0^1 \Big(\int_\Omega\Phi(t,x)dx\Big)^2dt\\
=&\int_0^1\int_\Omega \partial_t\Phi(t,x)\mu(t,x)+\Phi(t,x) \Big(-\nabla\cdot(\mu(t,x)\nabla\Phi(t,x))\Big)dxdt +\frac{\alpha}{2}\int_0^1 \Big(\int_\Omega\Phi(t,x)dx\Big)^2dt\\
=& \int_\Omega \Phi(1,x)\mu(1,x)dx-\int_\Omega \Phi(0,x)\mu(0,x)dx \\
&-\int_0^1\int_\Omega \Phi(t,x) \Big(\partial_t\mu(t,x)+\nabla\cdot (\mu(t,x)\nabla\Phi(t,x))\Big) dx dt+\frac{\alpha}{2}\int_0^1 \Big(\int_\Omega\Phi(t,x)dx\Big)^2dt\\
=& \int_\Omega \Phi(1,x)\mu(1,x)dx-\int_\Omega \Phi(0,x)\mu(0,x)dx \\
&-\int_0^1\int_\Omega \Phi(t,x) f(t) dx dt+\frac{\alpha}{2}\int_0^1 \Big(\int_\Omega\Phi(t,x)dx\Big)^2dt\\
=& \int_\Omega \Phi(1,x)\mu(1,x)dx-\int_\Omega \Phi(0,x)\mu(0,x)dx+ (-\alpha+\frac{\alpha}{2})\int_0^1 \Big(\int_\Omega\Phi(t,x)dx\Big)^2dt\\
=& \int_\Omega \Phi(1,x)\mu(1,x)dx-\int_\Omega \Phi(0,x)\mu(0,x)dx-\frac{\alpha}{2}\int_0^1 \Big(\int_\Omega\Phi(t,x)dx\Big)^2dt.
\end{split}
\end{equation*}
This concludes the proof.
\end{proof}
\section{The numerical method}\label{section4}
In this section, we propose to apply a primal-dual algorithm to solve unnormalized OT numerically. We then provide several numerical examples to demonstrate the effectiveness of this procedure.   
\subsection{Algorithm}
We present a primal-dual algorithm for problem \eqref{UOTT}. In particular, our method is based on its reformulation \eqref{UOT1}, named the minimal flux problem.  
Define the Lagrangian of \eqref{UOT1}: 
\begin{equation*}
\begin{split}
\mathcal{L}(m,\mu, f, \Phi)=&\int_0^1\int_\Omega \frac{\|m(t,x)\|^2}{2\mu(t,x)}dtdx+\frac{1}{2\alpha}\int_0^1 f(t)^2dt\\
&+\int_0^1\int_\Omega\Phi(t,x)\Big(\partial_t\mu(t,x)+\nabla\cdot m(t,x)-f(t)\Big)dxdt,
\end{split}
\end{equation*}
where $\Phi(t,x)$ is the Lagrange multiplier of the unnormalized continuity equation \eqref{UC}.  

Convex analysis shows that
$(m^*(t,x), \mu^*(t,x), f^*(t))$ is a solution to \eqref{UOT1} if and only if there is a $\Phi^*$ such that 
$(m^*, \Phi^*)$ is a saddle point of $\mathcal{L}(m, \mu, f, \Phi)$.
In other words, we can compute minimization \eqref{UOT1} by solving the following minimax problem
\begin{equation*}
\inf_{m, \mu, f} \sup_{\Phi}~\mathcal{L}(m,\mu, f, \Phi),
\end{equation*}
It is clear that $\mathcal{L}$ is convex in $m$, $\mu$, $f$ and concave in $\Phi$, and the interaction term is a linear operator. This property allows us to apply the Chambolle-Pock first order primal-dual algorithm \cite{CP}, which gives the update as follows.
\begin{equation}\label{update}
\left\{
\begin{split}
m^{k+1}(t,x)=&\arg\inf_{m} ~\mathcal{L}(m,\mu^k, f^k, \Phi^k)+\frac{1}{2\tau_1}\int_0^1\int_\Omega\|m(t,x)-m^k(t,x)\|^2dxdt\\
\mu^{k+1}(t,x)=&\arg\inf_{\mu} ~\mathcal{L}(m^k,\mu, f^k, \Phi^k)+\frac{1}{2\tau_1}\int_0^1\int_\Omega\|\mu(t, x)-\mu^k(t,x)\|^2dxdt\\
f^{k+1}(t)=&\arg\inf_{f}~\mathcal{L}(m^k,\mu^k, f, \Phi^k)+\frac{1}{2\tau_1}\int_0^1\|f(t)-f^k(t)\|^2 dt\\
\tilde\Phi^{k+1}(t,x)=&\arg\sup_{\Phi}~\mathcal{L}(\tilde m, \tilde \mu, \tilde f, \Phi)-\frac{1}{2\tau_2}\int_0^1\int_\Omega\|\Phi(t,x)-\Phi^k(t,x)\|^2dxdt\\
(\tilde m, \tilde \mu, \tilde f)=&2 (m^{k+1}, \mu^{k+1},  f^{k+1})-(m^{k}, \mu^{k},  f^{k})
\end{split}\right.
\end{equation}
where $\tau_1$, $\tau_2$ are given step sizes for primal, dual variables. These steps can be interpreted as a gradient descent in the primal variable $(m, \mu, f)$ and a gradient ascent in the dual variable $\Phi$.

It turns out that the optimizations in above update \eqref{update} have explicit formulas. The first line becomes 
\begin{equation*}
\begin{split}
m^{k+1}(t,x)=&\arg\inf_{m} ~ \Big\{\frac{\|m(t,x)\|^2}{2\mu^k(t,x)}-m(t,x)\cdot \nabla\Phi(t,x)+\frac{1}{2\tau_1}\|m(t,x)-m^k(t,x)\|^2\Big\}\\
=&\frac{\mu^k(t,x)}{\mu^k(t,x)+\tau_1}\Big(\tau_1\nabla\Phi(t,x)+m^k(t,x)\Big).
\end{split}
\end{equation*}
The second line of \eqref{update} simplifies to
\begin{equation*}
\begin{split}
\mu^{k+1}(t,x)=&\arg\inf_{\mu} ~\frac{\|m^k(t,x)\|^2}{2\mu(t,x)}-\partial_t\Phi(t,x)\cdot\mu(t,x) +\frac{1}{2\tau_1}|\mu(t,x)-\mu^k(t,x)|^2.
\end{split}
\end{equation*}
The above problem has an analytical solution by solving a cubic equation. The third line of \eqref{update} gives 
\begin{equation*}
\begin{split}
f^{k+1}(t)=&\arg\inf_{f} ~\Big\{\frac{1}{2\alpha}f(t)^2-f(t)\int_\Omega\Phi(t,x)dx +\frac{1}{2\tau_1}\|f(t)-f^k(t)\|^2\Big\} \\
=&\frac{\alpha}{\alpha+\tau_1}\Big(\tau_1\int_\Omega\Phi(t,x)dx+f^k(t)\Big).
\end{split}
\end{equation*}
The fourth line of \eqref{update} gives
\begin{equation*}
\begin{split}
\Phi^{k+1}(t,x)=& \arg\sup_\Phi\Big\{\Phi(t,x)\cdot(\partial_t\tilde\mu(t,x)+\nabla\cdot \tilde m(t,x) -\tilde f(t)) -\frac{1}{2\tau_2}\|\Phi(t,x)-\Phi^k(t,x)\|^2\Big\}\\
=&\Phi^k(t,x)+\tau_2\Big(\partial_t\tilde\mu^{k+1}(t,x)+\nabla\cdot \tilde m(t,x)-\tilde f(t)\Big).
\end{split}
\end{equation*}

Combining all above formulas, we are now ready to state the algorithm. 
\begin{tabbing}
aaaaa\= aaa \=aaa\=aaa\=aaa\=aaa=aaa\kill  
   \rule{\linewidth}{0.8pt}\\
   \noindent{\large\bf Algorithm: Primal-Dual method for Unnormalized OT}\\
  \1 \textbf{Input}: Unnormalized densities $\mu_0$, $\mu_1$; \\
    \3 Initial guess of $m^0$, $\mu^0$, $\Phi^0$, $f^0$, step size $\tau_1$, $\tau_2$.\\
  \1 \textbf{Output}: Minimizer $\mu(t,x)$; Dual variable $\Phi(t,x)$; Value $\textrm{UW}_2(\mu_0, \mu_1)$.\\
   \rule{\linewidth}{0.5pt}\\
1.  \1 \textbf{For} $k=1, 2, \cdots$ Iterate until convergence\\
2.  \2  $m^{k+1}(t,x)=\frac{\mu^k(t,x)}{\mu^k(t,x)+\tau_1}\Big(\tau_1\nabla\Phi(t,x)+m^k(t,x)\Big)$;\\
3.  \2  Solve $\mu^{k+1}(t,x)=\arg\inf_{\mu} ~\frac{\|m^k(t,x)\|^2}{2\mu(t,x)}-\partial_t\Phi(t,x)\cdot\mu(t,x)+\frac{1}{2\tau_1}|\mu(t,x)-\mu^k(t,x)|^2;$\\
4.  \2 $f^{k+1}(t)=\frac{\alpha}{\alpha+\tau_1}\Big(\tau_1\int_\Omega\Phi(t,x)dx+f^k(t)\Big)$;\\
5. \2 $\Phi^{k+1}(t,x)=\Phi^k(t,x)+\tau_2\Big(\partial_t\tilde\mu^{k+1}(t,x)+\nabla\cdot \tilde m(t,x)-\tilde f(t)\Big)$;\\
6. \2 $(\tilde m, \tilde \mu, \tilde f)=2 (m^{k+1}, \mu^{k+1},  f^{k+1})-(m^{k}, \mu^{k},  f^{k})$;\\
7. \1 \End\\
   \rule{\linewidth}{0.8pt}
\end{tabbing}
\subsection{Numerical Grid}

To apply the algorithm, we first define our numerical grid. For simplicity we consider the case where the space of interest is $\Omega = [0, 1]^d$ and time $\mathcal{T} = [0,1]$. Further, for the following explanations we consider the problem when $d = 2$, however, our grid construction can be constructed on any dimension by extending it in the obvious way. We will use the same symbol to represent both the continuous $u, m, \Phi, f$ and their respective discretized counterparts, as the difference between the two should be clear from context alone.

Let $n_t, n_x$, and $n_y$ be given then notate $\Delta t = \frac{1}{n_t}$, $\Delta x = \frac{1}{n_x}$, and $\Delta y = \frac{1}{n_y}$. Using this notation we define the following sets:
\begin{align*}
     \Omega_{(i,j)} &= [i\Delta x, (i+1)\Delta x] \times [j \Delta y, (j + 1)\Delta y]\\
    \mathcal{T}_{(k)} &= [k \Delta t, (k + 1)\Delta t]\\
    \Omega_{(i - 1/2,j)} &= [(i - 1/2)\Delta x, (i + 1/2)\Delta x] \times [j \Delta y, (j + 1)\Delta y]  \text{ for } i = 0,\dots, n_x\\
    \Omega_{(i,j - 1/2)} &= [i\Delta x, (i + 1)\Delta x] \times [(j - 1/2) \Delta y, (j + 1/2)\Delta y] \text{ for } j = 0, \dots, n_y
\end{align*}
where $i = 0,\dots, n_x - 1$, $j = 0, \dots, n_y - 1$, and $k = 0\dots, n_t - 1$ unless otherwise specified.

For the discretized problem we consider a $f_{(k)}$ that is constant along each $\mathcal{T}_{(k)}$, and consider $\mu_{(k, i, j)}$ and $\Phi_{(k, i, j)}$ that are constant along each $\mathcal{T}_{(k)} \times \Omega_{(i,j)}$. The vector $m_{(k, i, j)}$ has two components $m_{x, (k, i - 1/2, j)}$ and $m_{y, (k, i, j - 1/2)}$, that are constant along $\mathcal{T}_{(k)} \times \Omega_{(i - 1/2, j)}$ and $\mathcal{T}_{(k)} \times \Omega_{(i, j - 1/2)}$ respectively. Numerically $m$ quantifies the movement of density between each of the $\Omega_{(i,j)}$ and its spacial neighbors (i.e. $\Omega_{(i - 1, j)}, \Omega_{(i, j-1)}, \Omega_{(i + 1, j)}$, and $\Omega_{(i, j+1)}$) and so it is natural to define the components of $m$ not on $\Omega_{(i,j)}$ but rather on $\Omega_{(i - 1/2, j)}$, $\Omega_{(i + 1/2, j)}$, $\Omega_{(i, j - 1/2)}$ and $\Omega_{(i, j + 1/2)}$.

Using the above notation, we write the steps of the algorithm as:
\begin{align*}
    m_{x,(k, i - 1/2, j)} &= \begin{cases}
    \frac{\mu_{(k, i - 1, j)} + \mu_{(k, i - 1, j)}}{\mu_{(k, i, j)} + \mu_{(k, i - 1, j)} + 2\tau_1}\left(\tau_1 + \nabla_x\Phi_{(k, i - 1/2, j)} + m_{x,(k, i - 1/2, j)} \right) &\text{ if } i = 1,\dots, n_x - 1\\
    0 &\text{ if } i = 0, n_x
    \end{cases}\\
    m_{y,(k, i, j - 1/2)} &= \begin{cases}
    \frac{\mu_{(k, i, j)} + \mu_{(k, i, j - 1)}}{\mu_{(k, i, j)} + \mu_{(k, i, j - 1)} + 2\tau_1}\left(\tau_1 + \nabla_y \Phi_{(k, i, j - 1/2)} + m_{x,(k, i, j - 1/2)} \right) &\text{ if } j = 1,\dots,n_y - 1\\
    0 &\text{ if } j = 0, n_y
    \end{cases}\\
    u_{(k, i, j)} &= \text{root}^+(1, -(\tau_1 * \partial_t \Phi_{(k, i, j)} + u_{(k, i, j)}), 0,\\
    &\frac{-\tau_1}{8}\left ((m_{(k, i + 1/2, j)} + m_{(k, i - 1/2, j)})^2 + (m_{(k, i, j + 1/2)} + m_{(k, i, j - 1/2)})^2\right )) \\
    f_{(k)} &= \frac{\alpha}{\alpha + \tau_1}\left ( \tau_1 + \sum_{i}\sum_{j}\Phi_{(k,i,j)}\Delta x \Delta y+ f_{(k)} \right )\\
    \Phi_{(k, i, j)} &= \tau_2 * \left ( \partial_t \tilde u_{(k, i, j)} + \nabla \cdot  \tilde m_{(k, i, j)} - \tilde f_{(k)}\right ) + \Phi_{(k, i, j)}
\end{align*}
where
\begin{align*}
    \nabla_x \Phi_{(k, i - 1/2, j)} &= \frac{\Phi_{(k, i, j)} - \Phi_{(k, i - 1, j)}} {\Delta x} \\ 
    \nabla_y \Phi_{(k, i, j - 1/2)} &= \frac{\Phi_{(k, i, j)} - \Phi_{(k, i, j - 1)}} {\Delta y}; \\
    \partial_t \Phi_{(k, i, j)} &= \begin{cases}
    \frac{1}{\Delta t}\left (\frac{\Phi_{(1, i, j)}}{2} + \Phi_{(0, i, j)}\right ) \text{ if } k = 0\\
    \frac{1}{\Delta t} \left ( \frac{\Phi_{(2, i, j)}}{2} - \Phi_{0, i, j}\right ) \text{ if } k = 1\\
    \frac{1}{2\Delta t}\left ( \Phi_{(k + 1, i, j)} - \Phi_{(k - 1, i, j)}\right ) \text{ if } 1 < k < n_t - 2 \\
    \frac{1}{\Delta t} \left ( \Phi_{(n_t - 1, i, j)} - \frac{\Phi_{(n_t - 3, i, j)}}{2}\right ) \text{ if } k = n_t - 2\\
    \frac{1}{\Delta t} \left ( -\Phi_{(n_t - 1, i, j)} - \frac{\Phi_{(n_t - 2, i, j)}}{2}\right ) \text{ if } k = n_t - 1
    \end{cases}\\
    \text{root}^+(a,b,c,d) &= \text{ the largest real solution to } ax^3 + bx^2 + cx + d\\
    \partial_t u_{(k, i, j)} &= \begin{cases}
    \frac{1}{\Delta t}\left (u_{(1, i, j)} - u_{(0, i, j)}\right ) \text{ if } k = 0 \\
    \frac{1}{2 \Delta t} \left ( u_{(k + 1, i, j)} - u_{(k - 1, i, j)}\right ) \text{ if } 0 < k < n_t - 1 \\
    \frac{1}{\Delta t}\left (u_{(n_t - 1, i, j)} - u_{(n_t - 2, i, j)}\right ) \text{ if } k = n_t - 1
    \end{cases}\\
    \nabla \cdot m_{(k, i, j)} &= \frac{m_{x, (k, i + 1/2, j)} - m_{x, (k, i - 1/2, j)}}{\Delta x} + \frac{m_{y, (k, i, j + 1/2)} - m_{y, (k, i, j - 1/2)}}{\Delta y}.
\end{align*}
Note that the unusual boundary conditions of $\partial_t \Phi$ arise from the need to satisfy
\begin{equation*}
    \sum_k \Phi_{(k, i, j)}\partial_t u_{(k, i, j)} \Delta t = -\sum_k \partial_t \Phi_{(k, i, j)} u_{(k, i, j)}  \Delta t \quad \forall i, j.
\end{equation*}

\subsection{Numerical Experiments}

\begin{table}[ht!]
    \centering
    \begin{tabular}{l l| l l}
         Parameter & Value & Parameter & Value\\\hline\hline \multicolumn{2}{l}{\textbf{Discretization}}& \multicolumn{2}{l}{\textbf{Optimization}}\\ \hline
         $n_t$ & 15 & Iterations & 200,000 \\
         $n_x$ & 35 & $\tau_1$ & $10^{-3}$\\
         $n_y$ & 35 & $\tau_2$ & $10^{-1}$\\
          & & $\alpha$ & 100
    \end{tabular}
    \caption{Numerical parameters for our experiments. Note that for our one dimensional experiments, $n_y$ has no value.}
    \label{table:experimental-parameters}
\end{table}

Now we present our numerical results. The first two experiments are in one dimension, and the rest are in two. The numerical parameters for our experiments are given in Table \ref{table:experimental-parameters}.

\subsection{Experiment 1}
\begin{figure}[h!]
    \centering
    \begin{subfigure}{.31\linewidth}
        \centering
        \includegraphics[width=\textwidth,keepaspectratio]{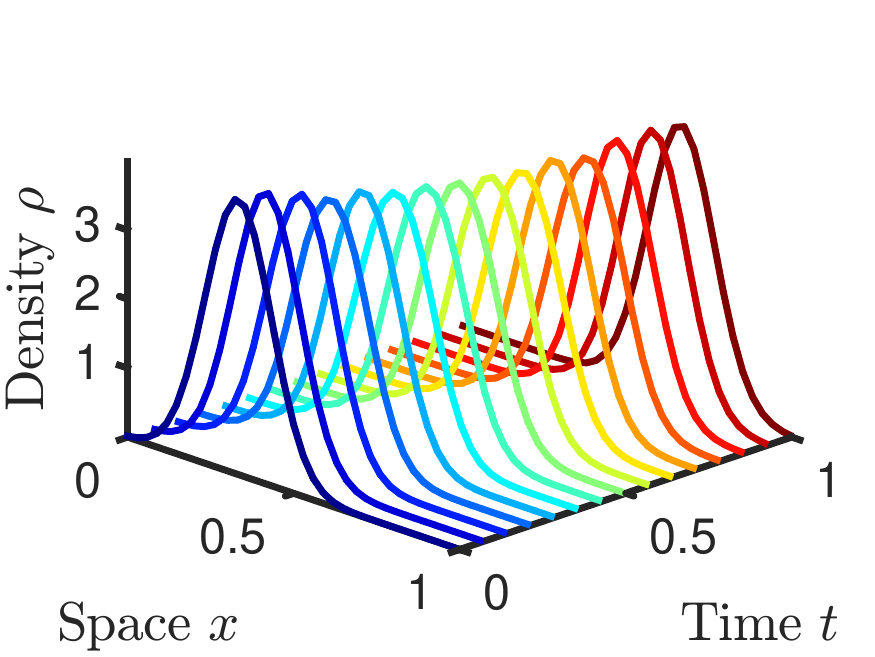}
        \caption{$u(t, x, y)$}
        \label{fig:ex-1:no-f}
    \end{subfigure}
    \begin{subfigure}{.31\linewidth}
        \centering
        \includegraphics[width=\textwidth,keepaspectratio]{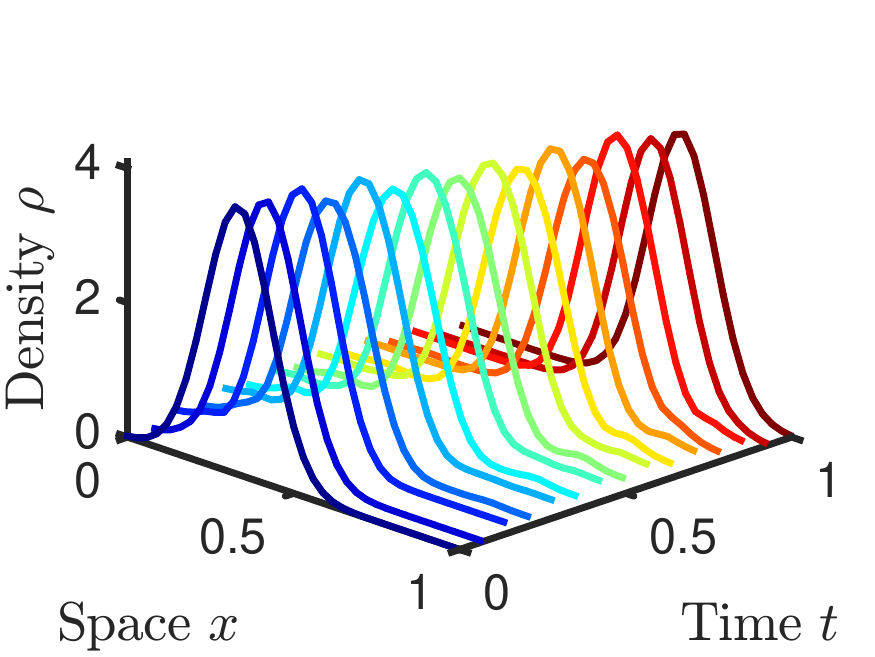}
        \caption{$u(t, x, y)$}
        \label{fig:ex-1:with-f}
    \end{subfigure}
    \begin{subfigure}{.31\linewidth}
        \centering
        \includegraphics[width=\textwidth,keepaspectratio]{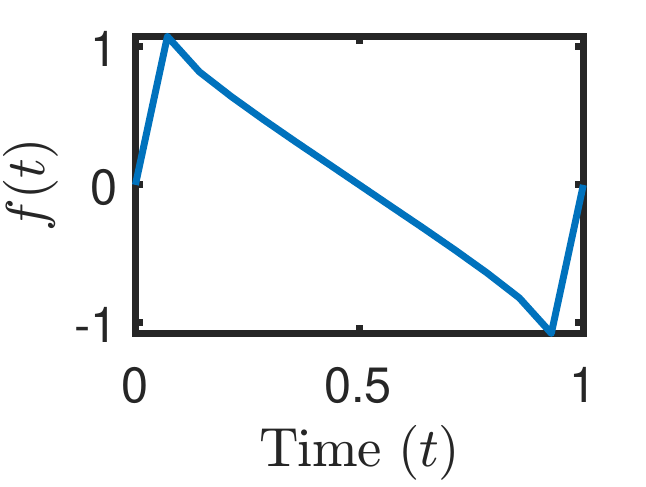}
        \caption{$f(t)$}
        \label{fig:ex-1:f}
    \end{subfigure}
    \caption{A plot of (A) $W_2(\rho_0, \rho_1)$, (B) $UW_2(\rho_0, \rho_1)$ and (C) $f(t)$ in the unbalanced case.}
    \label{fig:ex-1}
\end{figure}

Here we consider the problem where $\rho_0$ and $\rho_1$ are both one dimensional Gaussians of equal integral, $\Omega = [0,1]$ and
\begin{align*}
    \rho_0 &= N\left (x; \frac{1}{3}, 0.1\right)\\
    \rho_1 &= N\left (x; \frac{2}{3}, 0.1\right)\\
    N(x;\mu, \sigma^2) &= C e^{\frac{(x - \mu)^2}{2\sigma^2}} \text{ where $C$ is such that } \int_{\Omega} N(x; \mu, \sigma^2) dx = 1
\end{align*}
where $\sigma_0 = \frac{1}{3}, \sigma_1 = \frac{2}{3}, \mu_0 = \mu_1 = 0.1$. We plot the results in Figure \ref{fig:ex-1}. In this case the input densities are balanced and so $W_2(\rho_0, \rho_1)$ and $UW_2(\rho_0, \rho_1)$ appear similar. Indeed $UW_2(\rho_0, \rho_1) = 0.055$ and $W_2(\rho_0, \rho_1) = 0.056$.

Note that even in this simple case the behavior of $f(t)$ is nuanced. In this case, $\rho_0$ and $\rho_1$ are smooth, of equal integral and $W_2(\rho_0,\rho_1)$ is given by a simple analytical formula, and $f(t)$ is not identically zero. Integrating Equation \ref{UC} in space and time yields $ |\Omega| \int_{[0,1]} f(t) dt = \int_\Omega \rho_1 dx - \int_{\Omega} \rho_0 dx$, and so for balanced inputs $\int_{[0,1]} f(t) dt = 0$, but experiment 1 shows that $f \not \equiv 0$.

\subsection{Experiment 2}

\begin{figure}[h!]
    \centering
    \begin{subfigure}{.31\linewidth}
        \centering
        \includegraphics[width=\textwidth,keepaspectratio]{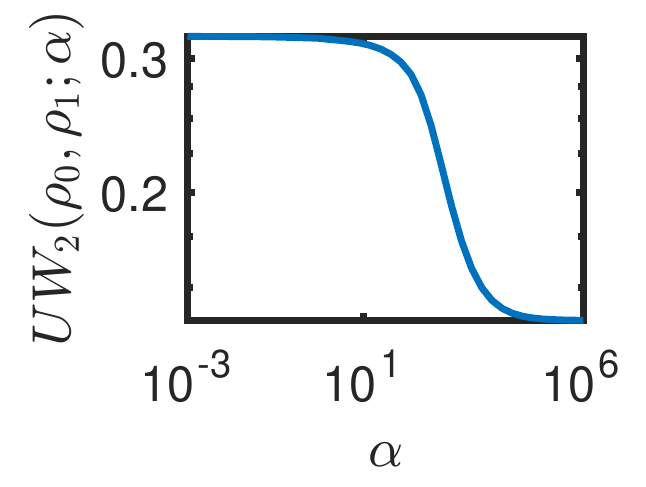}
        \caption{$UW_2(\rho_0', \rho_1; \alpha)$}
        \label{fig:ex-2-balanced:cost}
    \end{subfigure}
    \begin{subfigure}{.31\linewidth}
        \centering
        \includegraphics[width=\textwidth,keepaspectratio]{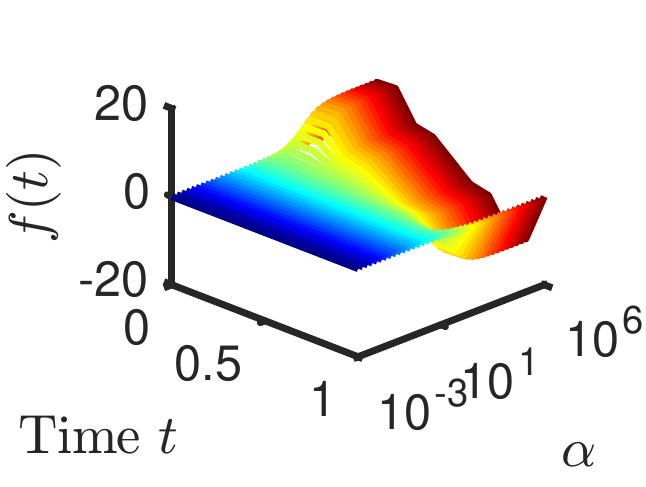}
        \caption{$f'(t; \alpha)$}
        \label{fig:ex-2-balanced:f}
    \end{subfigure}
    \begin{subfigure}{.31\linewidth}
        \centering
        \includegraphics[width=\textwidth,keepaspectratio]{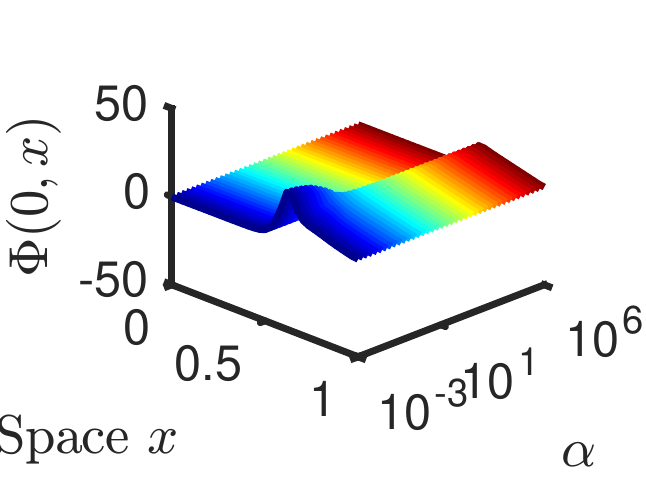}
        \caption{$\Phi'(t, x; \alpha)$}
        \label{fig:ex-2-balanced:phi}
    \end{subfigure}
    \\
    \begin{subfigure}{.31\linewidth}
        \centering
        \includegraphics[width=\textwidth,keepaspectratio]{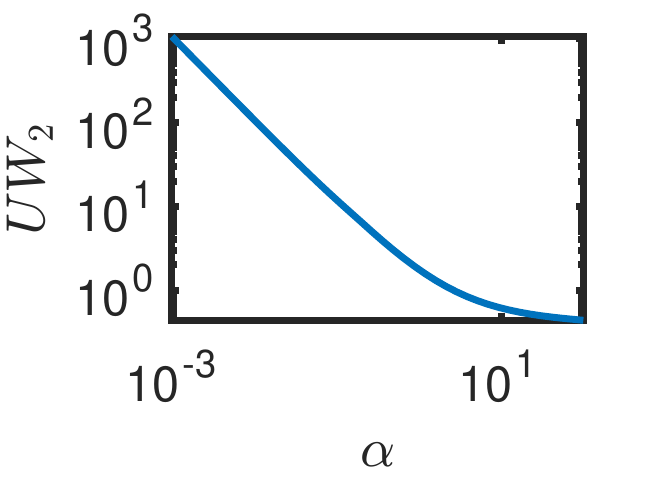}
        \caption{$UW_2(\rho_0, \rho_1; \alpha)$}
        \label{fig:ex-2-unbalanced:cost}
    \end{subfigure}
    \begin{subfigure}{.31\linewidth}
        \centering
        \includegraphics[width=\textwidth,keepaspectratio]{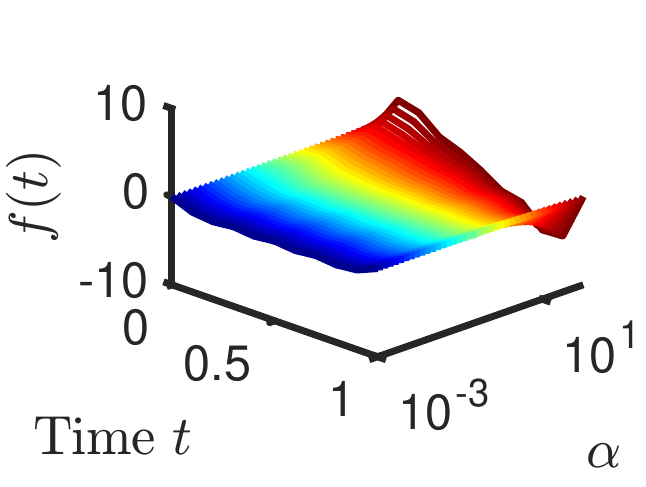}
        \caption{$f(t; \alpha)$}
        \label{fig:ex-2-unbalanced:f}
    \end{subfigure}
    \begin{subfigure}{.31\linewidth}
        \centering
        \includegraphics[width=\textwidth,keepaspectratio]{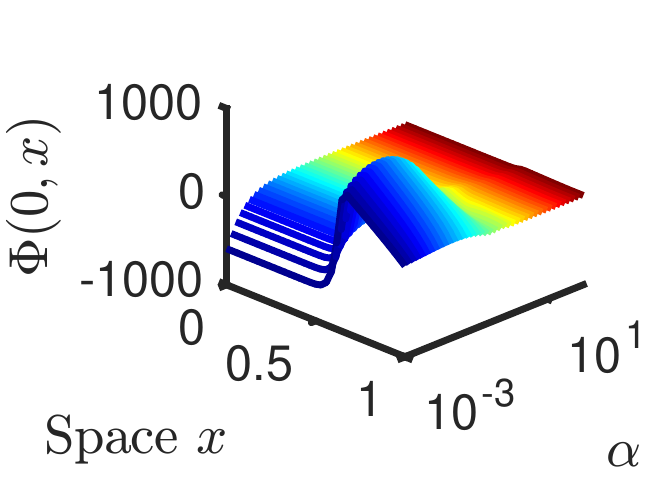}
        \caption{$\Phi(t, x; \alpha)$}
        \label{fig:ex-2-unbalanced:phi}
    \end{subfigure}
    \caption{A plot of the asymptotic behavior of $UW_2$ in $\alpha$ with balanced and unbalanced inputs. Balanced: (A) $UW_2(\rho_0', \rho_1; \alpha)$, (B) $f'(t; \alpha)$, (C) $\Phi'(t, x; \alpha)$, and unbalanced: (D) $UW_2(\rho_0, \rho_1; \alpha)$, (E) $f(t; \alpha)$, (F) $\Phi(t, x; \alpha)$.}
    \label{fig:ex-2}
\end{figure}

Again consider $\Omega = [0,1]$, however in this experiment we analyse the asymptotic behavior of $UW_2(\rho_0, \rho_1)$ as a function of $\alpha$ and $\alpha \rightarrow 0$ and $\alpha \rightarrow \infty$. Here 
\begin{align*}
    \rho_0 &= N\left (x; 0, 0.1\right) + N\left (x; \frac{1}{3}, 0.1\right)\\
    \rho_0' &= \frac{1}{2} \left (N\left (x; 0, 0.1\right) + N\left (x; \frac{1}{3}, 0.1\right)\right)\\
    \rho_1 &= N\left (x; \frac{2}{3}, 0.1\right).
\end{align*}
The balanced case refers to $UW_2(\rho_0', \rho_1)$, and the unbalanced refers to $UW_2(\rho_0, \rho_1)$. In both cases we compute the unnormalized Wasserstein distance. The results are given in Figure \ref{fig:ex-2}.

Figures \ref{fig:ex-2-balanced:cost} - \ref{fig:ex-2-balanced:phi} show that (at least numerically) $UW_2(\rho_0, \rho_1 ; \alpha), f(t,\alpha)$ and $\Phi(t,x;\alpha)$ converge as $\alpha \rightarrow 0^+$, $\alpha \rightarrow \infty$ when $\int_{\Omega}\rho_0 dx = \int_{\Omega}\rho_1 dx$. Further is seems plausible that for balanced inputs $UW_2(\rho_0, \rho_1 ; \alpha) \rightarrow W_2(\rho_0, \rho_1)$ as $\alpha \rightarrow 0^+$. For any $\alpha$ the $u, m$ and $\Phi$ from $W_2(\rho_0, \rho_1)$ along with $f(t) \equiv 0$ satisfy the constraint of Equation \ref{UC}. Formally sending $\alpha \rightarrow \infty$ causes $f(t)$ to 0.

Figures \ref{fig:ex-2-unbalanced:cost} - \ref{fig:ex-2-unbalanced:phi} illustrate the asymptotic behavior of $UW_2(\rho_0, \rho_1; \alpha)$ w.r.t. $\alpha$ when the inputs are unbalanced. In that case we (numerically) see that as $\alpha \rightarrow 0$, $f(t;\alpha)$ converges to a non-zero value, and both $UW_2(\rho_0, \rho_1;\alpha)$ and $\Phi(t,x;\alpha)$ diverge. This too is consistent with the formal argument that $UW_2(\rho_0, \rho_1 ; \alpha) \rightarrow W_2(\rho_0, \rho_1)$ as $\alpha \rightarrow 0^+$.

In a predecessor of this work \cite{bb} the authors solve for $W_2(\rho_1, \rho_2)$ using Lagrange multipliers in a similar formulation to equations \eqref{UOT}, \eqref{UC}. In their work the Lagrange multiplier $\Phi(t,x)$ is given up to an additive constant. If indeed $UW_2(\rho_0, \rho_1 ; \alpha) \rightarrow W_2(\rho_0, \rho_1)$ as $\alpha \rightarrow 0^+$ and $\Phi(t,x;\alpha)$ does converge then $\Phi(t,x;0^+)$ is given uniquely (as a limit) and there is no issue of undetermined constants.

\subsection{Experiment 3}

\begin{figure}[ht!]
    \centering
    \begin{subfigure}{.31\linewidth}
        \centering
        \includegraphics[width=\textwidth,keepaspectratio]{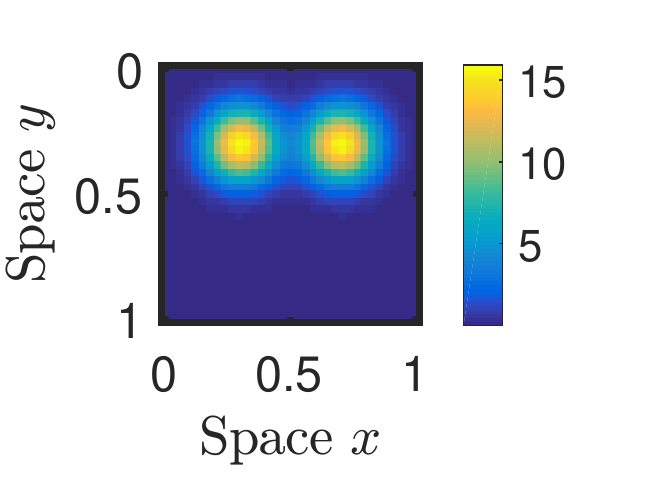}
        \caption{$\mu(0.00,x,y)$}
        \label{fig:ex-3:mu-0.00}
    \end{subfigure}
    \begin{subfigure}{.31\linewidth}
        \centering
        \includegraphics[width=\textwidth,keepaspectratio]{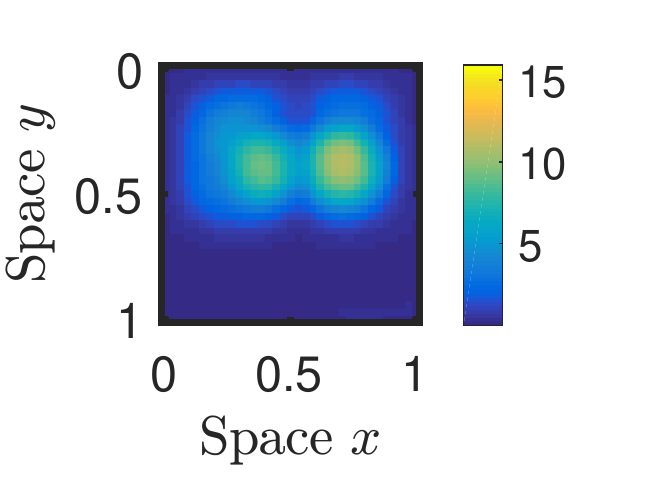}
        \caption{$\mu(0.21,x,y)$}
        \label{fig:ex-3:mu-0.21}
    \end{subfigure}
    \begin{subfigure}{.31\linewidth}
        \centering
        \includegraphics[width=\textwidth,keepaspectratio]{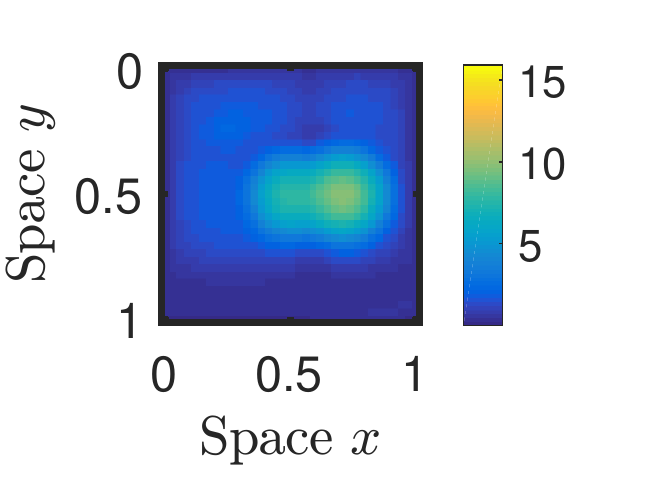}
        \caption{$\mu(0.50,x,y)$}
        \label{fig:ex-3:mu-0.50}
    \end{subfigure}
    \\
    \begin{subfigure}{.31\linewidth}
        \centering
        \includegraphics[width=\textwidth,keepaspectratio]{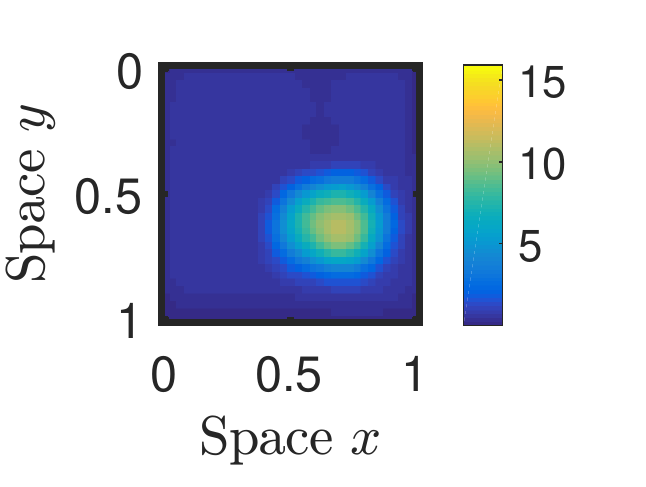}
        \caption{$\mu(0.79,x,y)$}
        \label{fig:ex-3:mu-0.79}
    \end{subfigure}
    \begin{subfigure}{.31\linewidth}
        \centering
        \includegraphics[width=\textwidth,keepaspectratio]{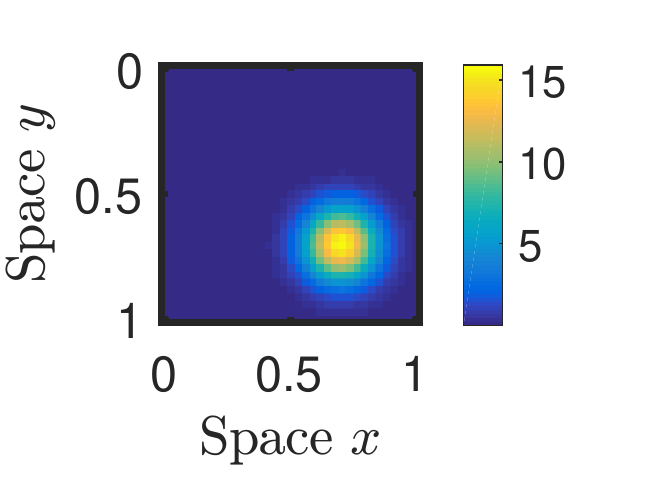}
        \caption{$\mu(1.00,x,y)$}
        \label{fig:ex-3:mu-1.00}
    \end{subfigure}
    \begin{subfigure}{.31\linewidth}
        \centering
        \includegraphics[width=\textwidth,keepaspectratio]{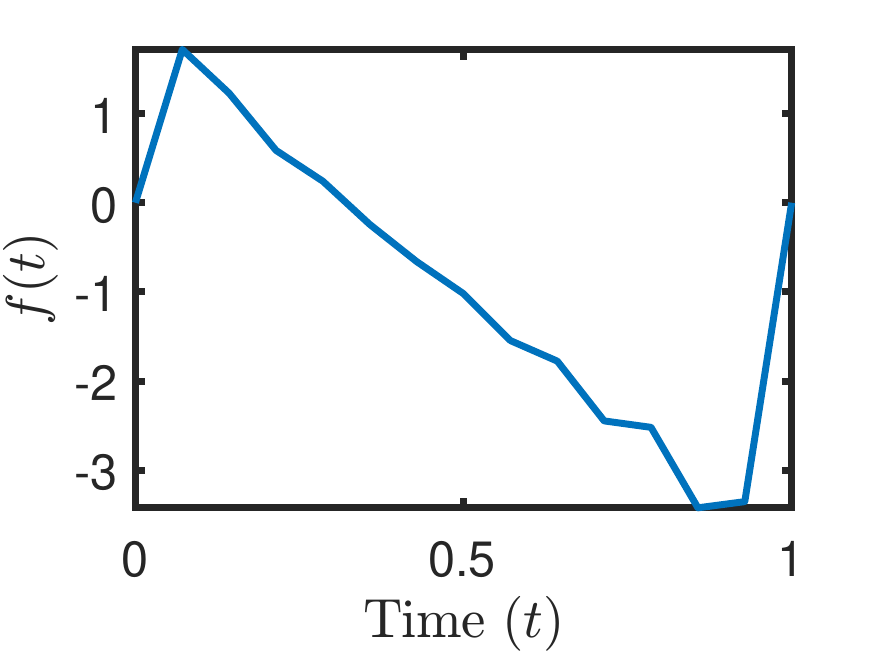}
        \caption{$f(t)$}
        \label{fig:ex-3:f}
    \end{subfigure}
    \caption{Plots of the $u(t,x,y)$ and $f(t)$ for $UW_2(\rho_0, \rho_1)$. (A) $\mu(0.00, x, y)$, (B) $\mu(0.21, x, y)$, (C) $\mu(0.50, x, y)$, (D) $\mu(0.79, x, y)$, (E) $\mu(1.00, x, y)$, (F) $f(t)$.}
    \label{fig:ex-3}
\end{figure}

Now consider the two dimensional problem where $\Omega = [0, 1]^2$. In this case
\begin{align*}
    \rho_0(x,y) &= N\left (x, y;0.3, 0.3, 0.1, 0.1 \right ) + N\left (x, y;0.7, 0.3, 0.1, 0.1 \right )\\
    \rho_1(x,y) &= N\left (x, y;0.7, 0.7, 0.1, 0.1 \right )\\
    N(x, y;\mu_1, \mu_2, \sigma_1^2, \sigma_2^2) &= C e^{\frac{(x - \mu_1)^2}{2\sigma_1^2} + \frac{(y - \mu_2)^2}{2\sigma_2^2}},
\end{align*}
where $C$ is a normalization constant such that $\int_{\Omega} N(x, y;\mu_1, \mu_2, \sigma_1^2, \sigma_2^2) dxdy = 1$. The results from our experiments are shown in Figure \ref{fig:ex-3}. Note that although the mass of $\rho_0$ is twice that of $\rho_1$, the optimal $f(t)$ is not non-positive. Indeed from $t = 0$ to $t \approx \frac{1}{4}$, $f(t)$ is positive, before staying non-positive for the rest of the interval. This again illustrates that even in the case of gaussian movement the behavior of $f(t)$ is nuanced, and violates naive basic intuition.

\subsection{Experiment 4}

\begin{figure}[h!]
    \centering
    \begin{subfigure}{.31\linewidth}
        \centering
        \includegraphics[width=\textwidth,keepaspectratio]{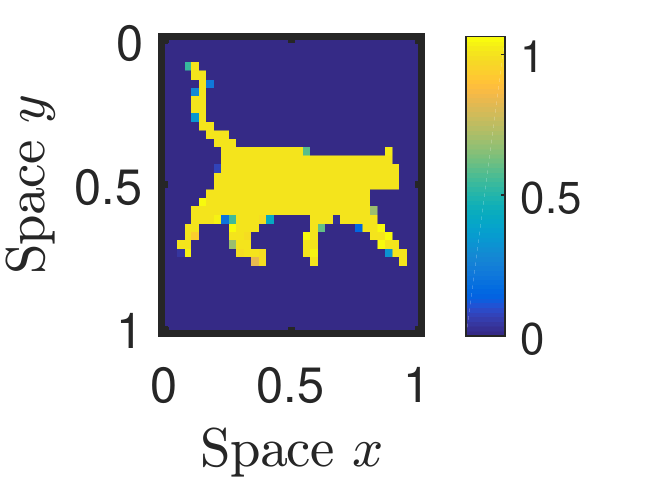}
        \caption{$\mu(0.00,x,y)$}
        \label{fig:ex-4:mu-0.00}
    \end{subfigure}
    \begin{subfigure}{.31\linewidth}
        \centering
        \includegraphics[width=\textwidth,keepaspectratio]{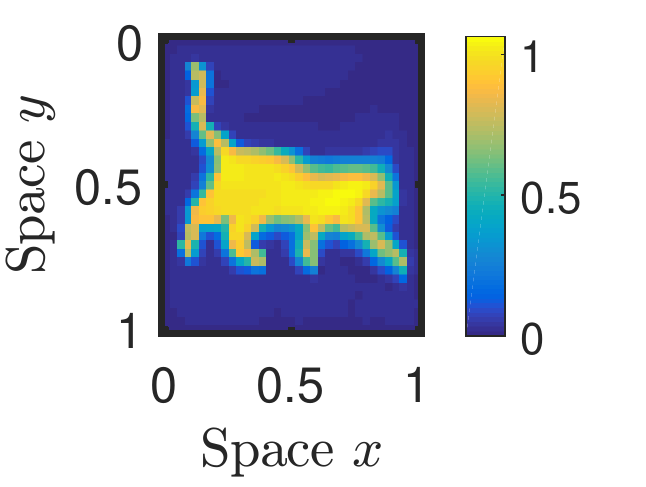}
        \caption{$\mu(0.21,x,y)$}
        \label{fig:ex-4:mu-0.21}
    \end{subfigure}
    \begin{subfigure}{.31\linewidth}
        \centering
        \includegraphics[width=\textwidth,keepaspectratio]{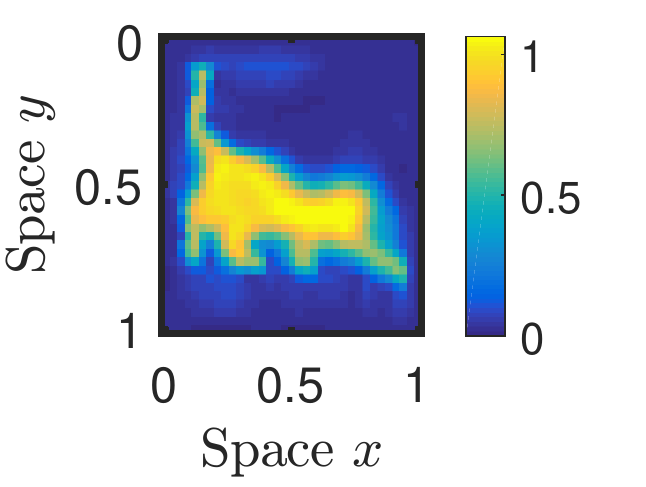}
        \caption{$\mu(0.50,x,y)$}
        \label{fig:ex-4:mu-0.50}
    \end{subfigure}
    \\
    \begin{subfigure}{.31\linewidth}
        \centering
        \includegraphics[width=\textwidth,keepaspectratio]{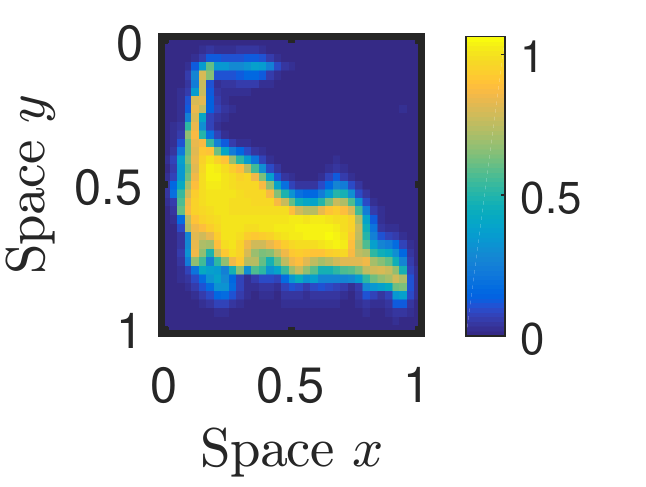}
        \caption{$\mu(0.79,x,y)$}
        \label{fig:ex-4:mu-0.79}
    \end{subfigure}
    \begin{subfigure}{.31\linewidth}
        \centering
        \includegraphics[width=\textwidth,keepaspectratio]{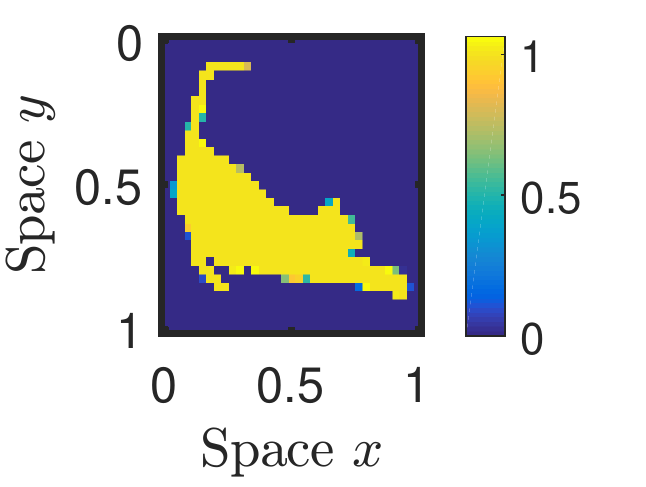}
        \caption{$\mu(1.00,x,y)$}
        \label{fig:ex-4:mu-1.00}
    \end{subfigure}
    \begin{subfigure}{.31\linewidth}
        \centering
        \includegraphics[width=\textwidth,keepaspectratio]{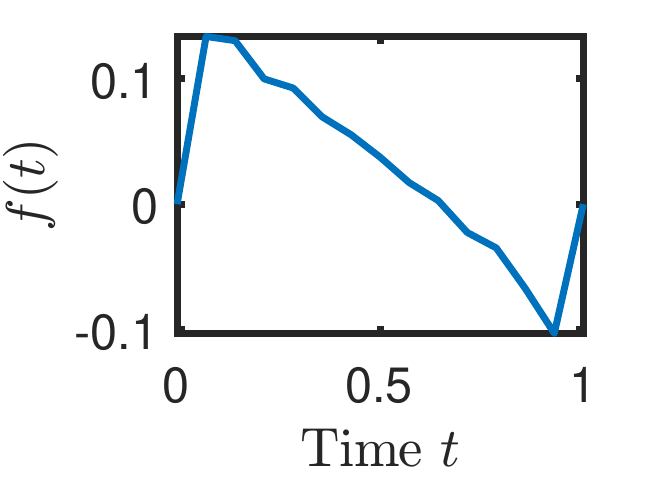}
        \caption{$f(t)$}
        \label{fig:ex-4:f}
    \end{subfigure}
    \caption{Plots of the $u(t,x,y)$ and $f(t)$ for $UW_2(\rho_0, \rho_1)$. (A) $\mu(0.00, x, y)$, (B) $\mu(0.21, x, y)$, (C) $\mu(0.50, x, y)$, (D) $\mu(0.79, x, y)$, (E) $\mu(1.00, x, y)$, (F) $f(t)$.}
    \label{fig:ex-4}
\end{figure}

Consider again the two dimensional problem, however this time we choose $\rho_0$ and $\rho_1$ to be the cats in \cite{LiRyuOsherYinGangbo2018_parallel}. Our results are summarized in Figure \ref{fig:ex-4}. This illustrates that our new method can be used as a general purpose OT solver for unbalanced inputs, and so can be used to interpolate between two functions.

\subsection{$L^1$ unnormalized Wasserstein metric}

\begin{figure}[h!]
    \centering
    \begin{subfigure}{.31\linewidth}
        \centering
        \includegraphics[width=\textwidth,keepaspectratio]{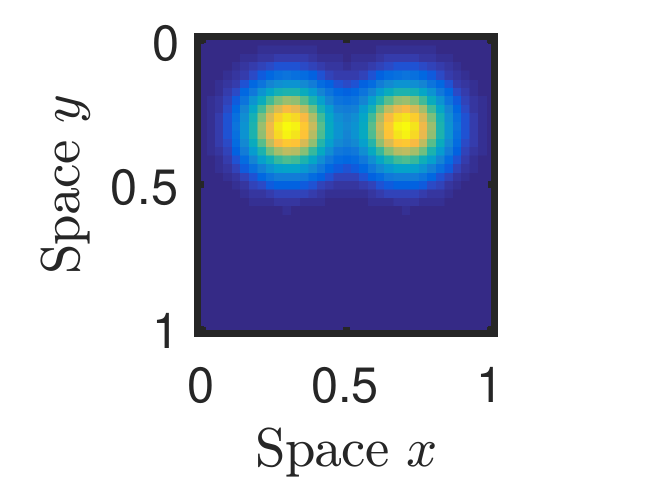}
        \caption{$\rho_0$}
        \label{fig:ex-5:gaussians:rho-0}
    \end{subfigure}
    \begin{subfigure}{.31\linewidth}
        \centering
        \includegraphics[width=\textwidth,keepaspectratio]{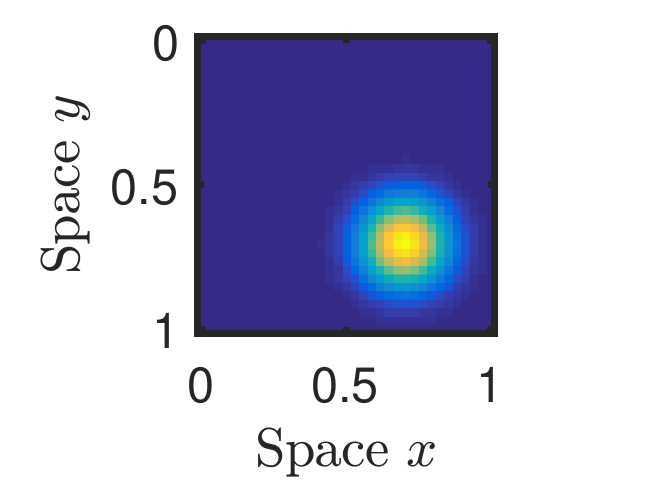}
        \caption{$\rho_1$}
        \label{fig:ex-5:gaussians:rho-1}
    \end{subfigure}
    \begin{subfigure}{.31\linewidth}
        \centering
        \includegraphics[width=\textwidth,keepaspectratio]{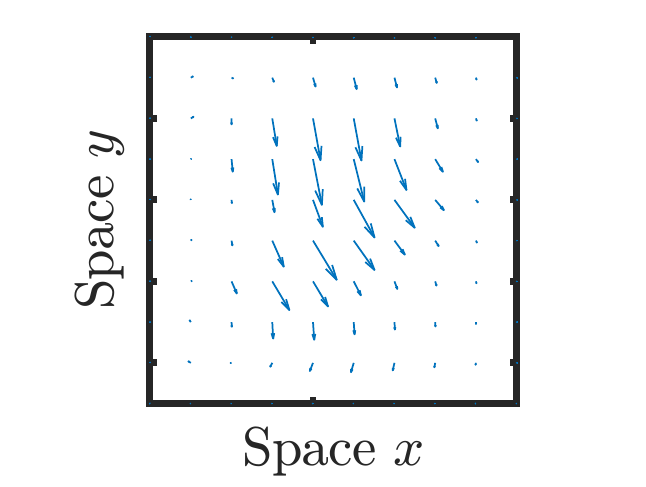}
        \caption{$m(x)$}
        \label{fig:ex-5:gaussians:flux}
    \end{subfigure}
    \\
    \begin{subfigure}{.31\linewidth}
        \centering
        \includegraphics[width=\textwidth,keepaspectratio]{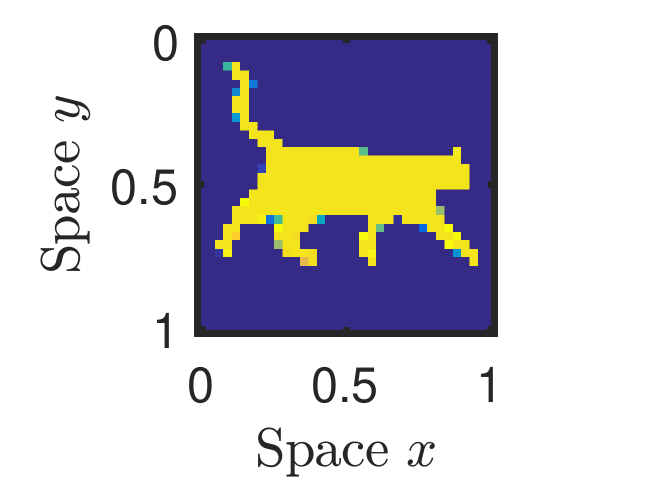}
        \caption{$\rho_0$}
        \label{fig:ex-5:cats:rho-0}
    \end{subfigure}
    \begin{subfigure}{.31\linewidth}
        \centering
        \includegraphics[width=\textwidth,keepaspectratio]{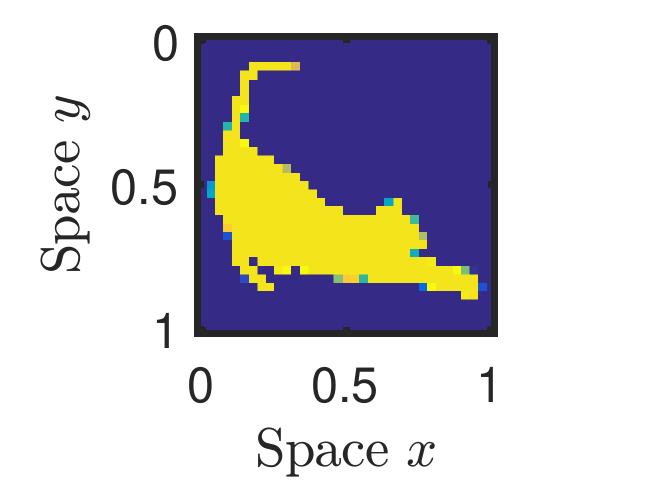}
        \caption{$\rho_1$}
        \label{fig:ex-5:cats:rho-1}
    \end{subfigure}
    \begin{subfigure}{.31\linewidth}
        \centering
        \includegraphics[width=\textwidth,keepaspectratio]{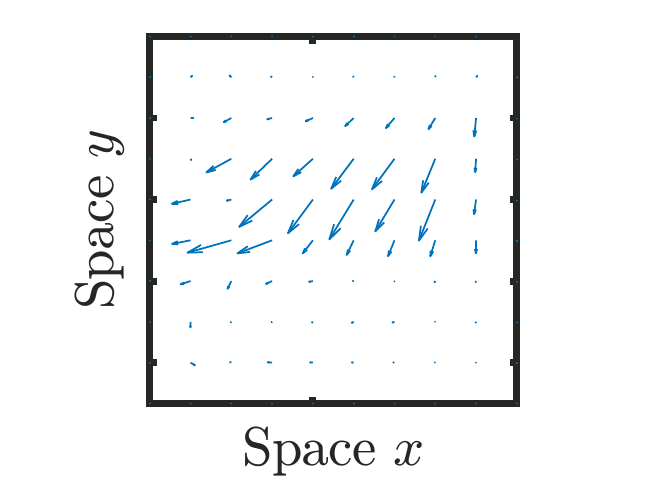}
        \caption{$m(x)$}
        \label{fig:ex-5:cats:flux}
    \end{subfigure}
    \caption{Plots of the $\rho_0$, $\rho_1$ and $m(x)$ for $UW_1(\rho_0, \rho_1)$ for the two gaussian movement (A) $\rho_0$, (B) $\rho_1$, (C) $m(x)$ and two (D) $\rho_0$, (E) $\rho_1$, (F) $m(x)$.}
    \label{fig:ex-5}
\end{figure}

In this subsection, we also present several numerical results for $UW_1$ in Figure \ref{fig:ex-5}. In \cite{PO} the authors develop the $UW_1$ metric (called the $\struc{\cdot}$ in that work) and show that it has the desirable property that is insensitive to noise and sensitive to the underlying structure. Numerically $UW_1(\rho_0, \rho_1)$ is much easier to compute as the time dimension can be integrated out, so that $f$ is constant, and $\mu$, $m$ and $\Phi$ have no time-varying component.

\section{Discussion}
In this paper, we propose and solve an unnormalized optimal transport problem. We show that the proposed distance is well defined, and we obtain the minimizer using the same key Hamilton-Jacobi equation \eqref{HJBM}. More importantly, computing the $L^p$ unnormalized Wasserstein metric has essentially the same computational complexity as the normalized one. In the future, we intend to study these related geometric properties and applications in inverse problems, machine learning and mean field games. 
\bibliographystyle{abbrv}

\end{document}